\newtheorem{prop}{Proposition}[subsection]
\newtheorem{theorem}[prop]{Theorem}
\newtheorem{lemma}[prop]{Lemma}
\newtheorem{definition}{Definition}[subsection]
\newtheorem*{remark}{Remark}
\DeclareMathOperator{\Deg}{Deg}
\newcommand{\Mod}{\,\mathrm{mod}\,}
\newcommand{\e}{\mathrm{e}}
\newcommand{\de}{\,\mathrm{d}}
\newcommand{\ii}{\mathrm{i}}
\newcommand{\Po}{\mathcal{P}}
\newcommand{\Q}{\mathds{Q}}
\newcommand{\I}{\mathcal{I}}
\newcommand{\Ev}{\mathcal{E}}
\newcommand{\Pro}{\mathds{P}}
\newcommand{\E}{\mathds{E}}
\newcommand{\F}{\mathds{F}}
\newcommand{\Z}{\mathds{Z}}
\newcommand{\N}{\mathds{N}}
\newcommand{\R}{\mathds{R}}
\newcommand{\C}{\mathds{C}}
\newcommand{\Indic}{\mathbb{1}}
\newcommand{\eps}{\varepsilon}
\newcommand{\bs}{\mathbf}
\title{Irreducibility of random polynomials of $\Z[X]$}
\author{Pierre-Alexandre Bazin\footnote{Universit\'e Paris Cit\'e, Sorbonne Université, CNRS \\ Institut de Math\'ematiques de Jussieu-Paris Rive Gauche, 75013 Paris, France. \\ Email: bazin@imj-prg.fr}}
\date{}
\begin{document}
\maketitle

\begin{abstract}
    In \cite{koukoulo}, Bary-Soroker, Koukoulopoulos and Kozma proved that when $A$ is a random monic polynomial of $\Z[X]$ of deterministic degree $n$ with coefficients $a_j$ drawn independently according to measures $\mu_j,$ then $A$ is irreducible with probability tending to $1$ as $n\to\infty$ under a condition of near-uniformity of the $\mu_j$ modulo four primes (which notably happens when the $\mu_j$ are uniform over a segment of $\Z$ of length $N\ge 35.$) We improve here the speed of convergence when we have the near-uniformity condition modulo more primes. Notably, we obtain the optimal bound $$\Pro(A \text{ irreducible}) = 1 -O(1/\sqrt n)$$ when we have near-uniformity modulo twelve primes, which notably happens when the $\mu_j$ are uniform over a segment of length $N\ge 10^8.$
\end{abstract}


\maketitle

\section{Introduction}
\subsection{Main results}

We consider $A$ a random monic polynomial in $\Z[X]$ of determinisitic degree $n$ with independently chosen coefficients. Formally, we introduce measures $(\mu_j)_{0\le j<n}$ on $\Z$ (generally taken all equal to the same measure~$\mu$), and we define our random monic polynomial $$A(X) = \sum_{j=0}^n a_jX^j \in \Z[X],$$ with $a_n = 1, a_j \sim \mu_j$ for all $j < n$ and the $a_j$ are independent.

When the $\mu_j$ are all taken equal to the same $\mu$ uniform over $\{0,1\},$ the problem of the irreduciblity of $A$ when $n\to\infty$ has been first studied by Konyagin \cite{konyagin}. The latest results, given below, have been achieved by Bary-Soroker, Koukoulopoulos and Kozma \cite{koukoulo}. 

\begin{theorem}[Bary-Soroker, Koukoulopoulos and Kozma \cite{koukoulo}]\label{kk}

Assume the $\mu_j$ are all the same and uniform over a segment $\llbracket a, a+N-1\rrbracket$ of length $N\ge 2.$ Then there are absolute constants $c,K,\delta >0$ such that when $n\ge(\log(a+N))^3,$ $$\Pro(A \text{ has a factor of degree}\le cn) \le \Pro(a_0 = 0) + Kn^{-\delta}.$$

When $N\ge 35,$ we further get $$\Pro(A \text{ reducible}) \le \Pro(a_0 = 0) + Kn^{-\delta}.$$
\end{theorem}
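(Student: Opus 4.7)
The proof proceeds by reducing $A$ modulo a few carefully chosen small primes and exploiting approximate independence of the resulting factorizations in $\F_p[X]$. Since each $a_j$ is uniform on a segment of length $N$, its reduction modulo any prime $p\lesssim N$ lies within total variation $O(p/N)$ of the uniform measure on $\F_p$ (and in fact with exponentially small discrepancy via a Fourier/character-sum argument). Consequently $A \Mod p$ is close in total variation to a uniform monic polynomial of degree $n$ in $\F_p[X]$. I would fix four primes $p_1,\dots,p_4$ of moderate size for which this approximation is strong enough; the hypothesis $n\ge (\log(a+N))^3$ is precisely what leaves room for this choice when $a+N$ is large.

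The key classical observation is that if $A=BC$ in $\Z[X]$ with $\deg B=d$, then modulo each $p_i$ the multiset $\Lambda_i$ of degrees of monic irreducible factors of $A\Mod p_i$ admits a sub-multiset summing to $d$. Writing $S(\Lambda)\subseteq\{0,1,\dots,n\}$ for the set of such sub-sums, the theorem reduces to the estimate
\[
\sum_{d=1}^{cn}\Pro\bigl(d\in S(\Lambda_1)\cap\cdots\cap S(\Lambda_4)\bigr)\le Kn^{-\delta},
\]
since the case $d=0$ (i.e.\ $X\mid A$) contributes exactly $\Pro(a_0=0)$. By the near-uniformity of the first step the $\Lambda_i$ are approximately independent, and by Arratia--Barbour--Tavar\'e each $\Lambda_i$ is close in law to the cycle type of a uniform random permutation of $\{1,\dots,n\}$, so the problem reduces to a combinatorial statement about random permutations.

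The classical bound $\Pro(d\in S(\sigma))\ll 1/\log(d+2)$ of Erd\H{o}s and Tur\'an, combined with independence across the four primes and summation over $d\le cn$, only yields $O(n/(\log n)^4)$, which is far too weak. The main obstacle --- and what I expect to be the central technical input of the argument --- is to upgrade this logarithmic saving into a polynomial one. This likely requires a finer analysis of the fluctuations of $\Ind_{d\in S(\sigma)}$, either through a sharp second-moment computation or through an anti-concentration input that genuinely exploits the simultaneous presence of four independent permutations; some loss in the constant $c$ seems unavoidable at this stage.

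Finally, very small degrees $d\le D$ fall outside the regime of the sub-sum estimate and must be treated separately, for instance by Mignotte-type height bounds on any hypothetical divisor $B$ combined with an explicit enumeration of such $B$. The second conclusion requires eliminating factors in the range $(cn,n/2]$ as well, where an analogous but more delicate subset-sum analysis is needed; the quantitative hypothesis $N\ge 35$ is presumably what ensures near-uniformity modulo enough primes to push such a stronger estimate through.
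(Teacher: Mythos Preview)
This theorem is quoted from \cite{koukoulo} rather than proved in the paper; the paper recovers it as the special case $C=0.01$, $r=4$ of its own Theorem~\ref{main-uniform}. So the relevant comparison is with the machinery of Sections~1--2 (equivalently, with the argument of \cite{koukoulo} on which that machinery is modelled).

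Your overall architecture is correct: reduce modulo $r=4$ small primes, control the discrepancy from uniformity via Fourier sums (this is exactly Proposition~\ref{bound-Delta}), treat very small degrees separately by height bounds (this is Proposition~\ref{small-degree-lemma}), and reduce the rest to a statement about divisors of degree $(k,\dots,k)$ in $\Po$. Your diagnosis of the obstacle is also correct: summing the per-$d$ bound $(\log d)^{-4}$ over $d\le cn$ gives only $n/(\log n)^4$, which is useless.

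The genuine gap is that you do not identify the mechanism that upgrades this to a polynomial saving, and your guesses (``second-moment'', ``anti-concentration for a fixed $d$'') point in the wrong direction. The key idea, both in \cite{koukoulo} and here, is \emph{not} an improved pointwise bound on $\Pro(d\in\bigcap_i S(\Lambda_i))$ but a correlation argument across~$d$. One fixes $m=k^{1-o(1)}$ and conditions on the $m$-friable part $\bs A_{\le m}$. Any divisor $\bs D$ of $\bs A$ of degree $(k,\dots,k)$ has $\bs D_{\le m}\mid\bs A_{\le m}$, so the expected number of such divisors is governed by $\tau(\bs A_{\le m})\Pi_m$. One then splits on the event $\Ev_m=\{\tau(\bs A_{\le m})\le \e^{(1-1/r)\Sigma_m}\}$: on $\Ev_m$ a sieve (Lemma~\ref{8.2}) shows that divisors of degree $(k,\dots,k)$ are rare \emph{simultaneously for all $k$ in a dyadic block} (Proposition~\ref{degree-k-prop}), while $\overline\Pro(\Ev_m)$ is bounded by Rankin's trick on the divisor function (Proposition~\ref{lemma-Ev}), producing the exponent $rQ\!\left(\tfrac{1-1/r}{\log 2}\right)$. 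Summing over $O(\log n)$ dyadic blocks costs nothing. Without this friable-part/divisor-function conditioning, your plan cannot close.

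For the second assertion, the issue is not a ``more delicate subset-sum analysis'' but simply pushing the same argument up to $k=n/2$. This forces $s=1$ in the Fourier hypothesis of Theorem~\ref{general-case}, i.e.\ one needs $\sum_{k\in\Z/Q\Z}|\widehat\mu(k/Q+\theta_0)|<\sqrt Q$ for all $Q\mid P$; by Proposition~\ref{unif-Q} this holds once $N$ exceeds an explicit threshold depending on $P=p_1\cdots p_4$, and $N\ge 35$ is the value computed in \cite{koukoulo}.
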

\begin{remark}\label{zero}
    The $\Pro(a_0 =0)$ term corresponds to the probability that $X$ divides $A,$ which does not depend on $n$ (and is equal to $0$ or $1/N$ depending on $a$).
\end{remark}

Building on the ideas from this paper, we achieve a better convergence speed, namely

\begin{theorem}
\label{uniform-case}

Assume the $\mu_j$ are all the same and uniform over a segment $\llbracket a, a+N-1\rrbracket$ of length $N\ge 2.$ Then there are absolute constants $c,K >0$ such that when $n\ge (\log (|a|+N))^3,$ $$\Pro(A \text{ has a factor of degree}\le cn) \le \Pro(a_0 = 0) + K/\sqrt n.$$

When $N\ge 10^8,$ we further get $$\Pro(A \text{ reducible}) \le \Pro(a_0 = 0) + K/\sqrt n.$$
\end{theorem}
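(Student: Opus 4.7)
The plan is to follow the overall strategy of \cite{koukoulo}, decomposing the event ``$A$ has an irreducible factor of degree $\le cn$'' according to the degree $d$ of that factor and treating $d=1$, small constant $d\ge 2$, and $d\in(d_0,cn]$ separately. The improvement over Theorem \ref{kk} lies in sharpening the estimate in each range, by invoking near-uniformity of the $\mu_j$ modulo a larger collection of primes and by replacing polynomial rates in $n$ with local central limit theorem rates.

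The linear factor case is the source of the $1/\sqrt n$ term and is essentially sharp. For $\alpha \in \Z\setminus\{0\}$, $(X-\alpha)\mid A$ is equivalent to $A(\alpha) = \sum_j a_j\alpha^j = 0$; an explicit local CLT for sums of nearly-uniform independent integers yields $\Pro(A(\alpha)=0) = O(1/(|\alpha|^n\sqrt n))$ for $|\alpha|\ge 2$ and $O(1/\sqrt n)$ for $\alpha=\pm 1$, while $\alpha=0$ contributes precisely the subtracted term $\Pro(a_0=0)$. For bounded $d\ge 2$, an analogous multi-dimensional local CLT applied to the residue of $A$ modulo each candidate monic irreducible $P\in\Z[X]$ of degree $d$ and bounded height yields a contribution $O(n^{-d/2})$ per $P$, which sums to $o(1/\sqrt n)$.

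For $d\in(d_0,cn]$ we use the cycle-structure framework: a $\Z$-factor of degree $d$ forces, for every prime $p$, the factorization of $A\bmod p$ in $\F_p[X]$ to admit a sub-collection of irreducible factors of total degree $d$. Choosing twelve primes $p_1,\dots,p_{12}$ modulo which the $\mu_j$ are near-uniform, the joint distribution of $(A\bmod p_i)_{1\le i\le 12}$ is close to the product of uniform distributions on monic degree-$n$ polynomials over $\F_{p_i}$; via the standard correspondence with cycle structures of uniformly random permutations, the probability in question is bounded by
$$\prod_{i=1}^{12} \Pro\bigl(\pi_i\in S_n\text{ has a set of cycles with lengths summing to }d\bigr) + (\text{near-uniformity defect}).$$
The technical core is then a per-permutation bound of shape $O(d^{-\eta}(n-d)^{-\eta})$, whose twelfth power, summed over $d_0 < d \le cn$, is $o(1/\sqrt n)$.

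The principal obstacle is Step 3 for $d$ close to $n/2$, where the per-prime probability of such a cycle sub-sum does not decay and one must genuinely exploit independence across twelve primes in a quantitatively strong way, rather than relying on a single-prime bound. A secondary obstacle is ensuring the total near-uniformity error modulo twelve primes, accumulated over $d$, remains below $1/\sqrt n$, which is precisely what fixes the explicit threshold $N\ge 10^8$ in the statement.
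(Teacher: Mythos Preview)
Your Step~3 contains a genuine gap, not just missing detail. The claimed per-permutation bound $O(d^{-\eta}(n-d)^{-\eta})$ is false: for any fixed $d$ and $n\to\infty$, the probability that a uniform permutation of $[n]$ (equivalently, a uniform monic degree-$n$ polynomial over $\F_p$) has a set of cycles summing to $d$ tends to a \emph{positive} limit---for instance $1-1/\e$ when $d=1$---so it cannot be $O((n-d)^{-\eta})$. The correct single-prime bound, from the polynomial analogue of Ford's theorem quoted in the paper, is $O(\min(d,n-d)^{-\eta})$ with $\eta=Q(1/\log 2)\approx 0.086$. Even granting your product over twelve primes, the sum $\sum_{d_0<d\le cn}\min(d,n-d)^{-12\eta}$ with $12\eta\approx 1.03$ therefore tends to a \emph{positive constant} depending only on $d_0$, not to anything $o(1/\sqrt n)$; and raising $d_0$ to $n^{\eps}$ only salvages $n^{-\eps(12\eta-1)}\approx n^{-0.03\eps}$, still far from $n^{-1/2}$. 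Separately, your factorisation of the joint probability as a product of marginals plus a defect is itself unjustified: the control on $\Delta_{\bs A}$ from Proposition~\ref{bound-Delta} bounds $\bigl|\Pro(\bs B|\bs A)-\|\bs B\|^{-1}\bigr|$ for individual $\bs B\in\Po^X$ with $\max\deg\bs B\le n/2+o(n)$, which is much weaker than the total-variation closeness you would need to transfer the event ``each $A_i$ has a degree-$d$ factor''.

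The paper's route is structurally different. It never reduces to per-prime marginals; instead it works with the \emph{joint} $m$-friable part $\bs A_{\le m}\in\Po$ and the global divisor count $\tau(\bs A_{\le m})$, and a Rankin argument on this joint object (Propositions~\ref{degree-k-prop} and~\ref{lemma-Ev}) yields a bound $O\bigl(n_1^{-rQ((1-1/r)/\log 2)+o(1)}\bigr)$ for the whole range $d\in[n_1,n/2s]$ at once (Proposition~\ref{main-mod-pi}), rather than a per-$d$ estimate to be summed. Two nested applications---first with a large auxiliary $r$ and $n_1=n^{\eps}$ (Lemma~\ref{4}), then with $r=12$, $s=1$ and $n_1=\delta n$ (Theorem~\ref{main-uniform})---combine to give $O(n^{-1/2})$; the threshold $N\ge 10^8$ enters through (\ref{bound-N}) to allow $s=1$ at $r=12$. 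Small degrees are handled not by a multidimensional local CLT over bounded-height irreducibles but by Proposition~\ref{small-degree-lemma} for non-cyclotomic factors of degree $\le n^{1/10}$, while the remaining cyclotomic divisors $\Phi_d$ (precisely the factors the mod-$p$ reduction cannot rule out) are bounded via the Kolmogorov--Rogozin inequality applied to $a_i'=\sum_{j\equiv i\ (d)}a_j$; this last step, not the linear-factor case over $\Z$, is where the $1/\sqrt n$ rate actually originates.
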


The $O(1/\sqrt n)$ bound here is optimal: indeed, $X+1$ divides $A$ with probability $\Pro(A(-1)=0),$ which decreases as $1/\sqrt n$ when $N$ is fixed. It can thus be useful to treat small cyclotomic factors (which, like $X+1,$ have high probability of appearing) separately in order to obtain better convergence speeds. Indeed, Breuillard and Varj\'u \cite{bvarju} proved the following result assuming the Generalised Riemann Hypothesis (GRH) 
(their result is given for general measure $\mu,$ but we specify it here for $\mu$ uniform over a segment).
\begin{theorem}[Breuillard and Varj\'u \cite{bvarju}, Corollary 3]\label{bvarj}
    Assume GRH and assume the $\mu_j$ are all equal to the same measure $\mu$ of finite second moment, not supported on a singleton. Fix $C > 0$ and let $U_C$ the finite subset of $\C$ consisting of $0$ and all roots of unity $\omega$ such that $[\Q(\omega):\Q] < C.$ Then for all $n,$ $$\Pro(A \text{ reducible}) = \Pro\big(\exists \omega\in U_C : A(\omega) = 0\big) + O_{\mu,C}(n^{-C/2}).$$
\end{theorem}

We used here the usual big-Oh notation: we write "$f(n) = O_C(g(n))$ when $n\ge n_0$" if for some constant $K$ depending only on $C,$ $|f(n)|\le K|g(n)|$ for all $n\ge n_0.$ We will also note $f(n) = O(n^{C+o(1)})$ when $f(n) = O_{\delta}(n^{C+\delta})$ for all $\delta>0.$

Without the Riemann hypothesis, we achieve here the same precision as Theorem \ref{bvarj} when $\mu$ is uniform over a segment large enough compared to $C.$

\begin{theorem}
\label{main-uniform}

Assume the $\mu_j$ are all the same and uniform over a segment $\llbracket a, a+N-1\rrbracket$ of length $N\ge 2.$ Fix $C > 0$ and let $U_C$ the finite subset of $\C$ consisting of $0$ and all roots of unity $\omega$ such that $[\Q(\omega):\Q] < C.$ We then have constants $\delta$ and $N_0$ depending only on $C$ such that when $n\ge (\log (|a|+N))^3,$
\begin{itemize}
    \item when $N\ge N_0,$ $\Pro(A \text{ reducible}) = \Pro\big(\exists \omega\in U_C : A(\omega) = 0\big) + O_{C}(n^{-C/2});$
    \item in the general case $N\ge 2,$ we only get $$\Pro(A \text{ has a factor of degree }\le \delta n) = \Pro\big(\exists \omega\in U_C : A(\omega) = 0\big) + O_{C}(n^{-C/2}).$$
\end{itemize}
\end{theorem}

When $C = 0.01,$ we can take $N_0 = 35$ which gives back Theorem \ref{kk} (and decreasing $C$ does not decrease $N_0$ further); when $C = 1,$ we can take $N_0 =10^8.$ In general, values of $N_0$ are given by equation~(\ref{bound-N}). Theorem \ref{uniform-case} then immediately follows from Theorem \ref{main-uniform} with $C = 1.$

\subsection{A master theorem}

We give the main result with general measures $\mu_j,$ from which we will derive Theorem \ref{main-uniform}. We first recall a definition

\begin{definition}
For a measure $\mu$ on $\Z,$ we note $$\widehat\mu : \theta\in\R/\Z \longmapsto \E_\mu\left(\e^{2\ii\pi\theta Y}\right) = \sum_{j \in \Z} \mu(j)\e^{2\ii\pi\theta j}$$ its Fourier transform.
\end{definition}
\begin{theorem}
\label{general-case}
    Let $A$ a random monic polynomial of degree $n$ with independent coefficients following probability measures $\mu_j$ and $P = p_1\ldots p_r$ a product of $r\ge 4$ distinct primes.
    
    Assume we can find an integer $s\ge 1$ (independent of $n$) such that for all $j$ and $Q,R,\ell$ with $QR = P,\ Q > 1$ and $\ell \in \Z/R\Z,$ we have \begin{equation}\label{condition-hatmu}
        \sum_{k\in \Z/Q\Z} \big|\widehat{\mu_j}(k/Q+\ell/R)\big|^s\le \left(1-\frac1{\log n}\right)\sqrt Q.
    \end{equation} 
    Then when $n\ge P^4$ and $n_1\le n/2s,$ we have $$\Pro\big(\exists D|A \text{ of degree } \in [n_1, n/2s]\big) = O_{r,s}\left(n_1^{-rQ\left(\frac{1-1/r}{\log 2}\right) + o(1)}\right),$$ with $Q(t) := t\log t - t +1 = \int_1^t \log u\,\de u.$
\end{theorem}

Bary-Soroker, Koukoulopoulos and Kozma \cite[Theorem 7]{koukoulo} obtained the same result with a bound~$O_s(n^{-c}) ;$ the key difference is that we are able to achieve an upper bound that scales with the number~$r$ of primes that are considered.

Here, the condition (\ref{condition-hatmu}) on the $\widehat{\mu_j}$ translates to a kind of near-uniformity modulo $P$ : indeed, the sum is minimised when $\mu$ is uniform mod $P,$ in which case we have $|\widehat{\mu_j}(a/P)| = 0$ for all $a\not\equiv 0\mod P.$ We will show in Section \ref{uniform-final} that (\ref{condition-hatmu}) is verified with $s=1$ when the $\mu_j$ are uniform over a segment of length $N$ large enough in terms of $P.$ 

To prove Theorems \ref{uniform-case} and \ref{main-uniform} though, Theorem \ref{general-case} is not sufficient: indeed, it only gives a useful bound when $n_1$ grows to infinity with $n.$ Thus, we need to exclude the possibility of small degree factors separately. These have already been treated by Konyagin \cite{konyagin} for polynomials with coefficients in $\{0,1\}.$ In the more general setting of Theorem \ref{main-uniform}, we can use the result proved in \cite[§7]{koukoulo}.

\begin{prop}[\cite{koukoulo}, §7]\label{small-degree-lemma}
    Suppose all the $\mu_j$ have support in $[-H,H]$ for some $H>0,$ and do not take any value with probability higher than $1-c$ for some $c>0.$ Then for $n\ge(\log H)^3,$ $$\Pro\big(A \text{ has a non-$X,$ non-cyclotomic irreducible factor of degree }\le n^{1/10}\big) = O_c\left(\e^{-\sqrt n}\right).$$
\end{prop}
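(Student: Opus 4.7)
The plan is to convert the divisibility statement into an anti-concentration inequality for $A$ evaluated at algebraic numbers, following the strategy pioneered by Konyagin and adapted in \cite{koukoulo}. Suppose $A$ has such a factor $P$, so $P$ is monic, irreducible, of degree $d \le n^{1/10}$, and is neither $X$ nor cyclotomic. Then $P$ has a root $\alpha$ which is a nonzero algebraic integer that is not a root of unity; by Kronecker's theorem together with quantitative lower bounds on the Mahler measure (Dobrowolski/Smyth), we may take $\alpha$ with $|\alpha| \ge 1 + c_0/d^{3}$. Moreover $P \mid A$ forces $A(\alpha) = 0$, i.e., the complex random sum $\sum_{j=0}^n a_j \alpha^j$ vanishes.

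The proof then proceeds in two steps. First, count the candidate factors $P$: any such $P$ dividing $A$ has coefficients bounded by Mignotte's inequality in terms of $\|A\|_\infty \le (n+1)H$, yielding at most $\exp(O(d^2 \log(dH))) = \exp(O(n^{1/5}\log H))$ candidates. Using the hypothesis $n \ge (\log H)^3$, this is at most $\exp(n^{1/5 + o(1)})$, safely smaller than $e^{\sqrt n}$. Second, for each candidate $P$ and each chosen root $\alpha$ with $|\alpha| \ge 1 + c_0/d^{3}$, bound $\Pro(A(\alpha) = 0)$ by an Erd\H{o}s--Littlewood--Offord type anti-concentration argument applied to the sum $S := \sum_j a_j \alpha^j$ in $\C$. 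The non-degeneracy assumption $\max_x \mu_j(x) \le 1-c$ means each independent summand $a_j \alpha^j$ provides an $\Omega(c)$-fluctuation of magnitude $|\alpha|^j$; this should give a pointwise density bound on $S$ that, integrated against the geometric growth of $|\alpha|^j$, yields $\Pro(A(\alpha) = 0) = O(\exp(-c'n/d^{O(1)}))$. For $d \le n^{1/10}$ this is $\exp(-n^{1-o(1)})$, hugely better than $e^{-\sqrt n}$, and a union bound over candidates closes the argument.

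The main obstacle is the classical one for these problems: roots $\alpha$ that lie extremely close to the unit circle (Salem-like configurations), since a single evaluation gives essentially no anti-concentration when $|\alpha| \to 1$. The standard remedy, which is what \cite{koukoulo}[\S7] carries out, is to exploit \emph{all} Galois conjugates of $\alpha$ simultaneously, or equivalently to work with $\text{Res}(P, A)$, converting the problem into controlling the joint anti-concentration of a vector-valued random walk whose step sizes are governed by the Mahler measure of $P$. Dobrowolski's quantitative Lehmer-type bound $M(P) \ge 1 + c/(\log d)^3$ ensures that even for near-cyclotomic $P$ there is enough room to run the anti-concentration, at the cost of polynomial factors in $d$ which are harmless here because $d \le n^{1/10}$. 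Combining this with the counting and union bound above delivers the claimed $O_c(e^{-\sqrt n})$ bound.
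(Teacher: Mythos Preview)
The paper does not give its own proof of this proposition; as the remark immediately following it explains, the result is imported wholesale from \cite{koukoulo}[\S7] (specifically, from within the proof of Proposition~2.1 there). Your outline---counting candidate factors via Mignotte, picking a root off the unit circle via a quantitative Lehmer--Dobrowolski bound, and running an anti-concentration/union-bound argument, with the resultant (all conjugates at once) handling the Salem obstacle---is exactly the strategy of that reference, so there is no methodological divergence to discuss.

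One arithmetic slip worth fixing in your write-up: from $n\ge(\log H)^3$ you only get $\log H\le n^{1/3}$, so your candidate count $\exp\bigl(O(d^2\log(dH))\bigr)$ at $d=n^{1/10}$ is $\exp\bigl(O(n^{8/15})\bigr)$, not $\exp\bigl(n^{1/5+o(1)}\bigr)$ as you claim. This does not break the argument, since your anti-concentration bound $\exp(-c'n/d^{O(1)})=\exp(-n^{1-o(1)})$ still swamps it in the union bound, but the intermediate sentence ``this is at most $\exp(n^{1/5+o(1)})$'' is false as written. (In fact Mignotte gives the sharper count $\exp\bigl(O(d^2+d\log(nH))\bigr)=\exp\bigl(O(n^{13/30})\bigr)$, which \emph{is} below $\e^{\sqrt n}$ directly.)
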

\begin{remark}
    This result is not directly stated anywhere in \cite{koukoulo}, but appears in the proof of \cite[proposition~2.1]{koukoulo}, which combines it with a weaker result on cyclotomic divisors. However, in order to achieve the better precision in Theorem \ref{main-uniform} we need a stronger result on cyclotomic divisors, given below.
\end{remark}

\begin{lemma}\label{small-cyclo}
    Assume all the $\mu_j$ do not take any value with probability higher than $1-c$ for some $c>0.$ Then for $\Phi_d$ the $d$-th cycloctomic polynomial (of degree $\varphi(d)$), we have $$\Pro(\Phi_d|A) \le \left(\frac{Kn}{cd}\right)^{\varphi(d)/2}$$ for some absolute constant $K.$
\end{lemma}
\begin{remark}
    When all $\mu_j$ are equal, this is \cite[lemma 45]{bvarju} with a more precise dependence in $\mu.$ The proof for general $\mu_j$ is given in Section \ref{small-degree}.
\end{remark}

Using Theorem \ref{general-case} with $n_1 = n^{1/10}$ together with Proposition \ref{small-degree-lemma} is sufficient to prove Theorem \ref{main-uniform}, but does not give the best values of $N_0(C)$ (and, in particular, would only achieve a $O(n_1^{-1/2}) = O(n^{-1/20})$ error term when $\mu$ is uniform over a segment of length $N = 10^8,$ instead of $O(n^{-1/2})$). The trick in order to achieve the better values of $N_0$ is to first use Theorem \ref{general-case} with $n_1 = n^{1/10}$ and large values of $r$ and $s$ in order to rule out factors in $[n^{1/10}, \eps n]$ for some $\eps > 0,$ and then use Theorem \ref{general-case} again, this time with a smaller value of $r$ (for which we can take $s=1$) together with $n_1 = \eps n$ in order to rule out factors in $[\eps n, n/2].$ The detailed argument is given in Section \ref{uniform-final}. 

\subsection{Definitions and notations}
\label{definitions}

We now give definitions that will be useful for the proof of Theorem \ref{general-case}. When $p$ is a prime, we note $\F_p[X]^u$ the set of monic polynomials of $\F_p[X],$ and $\F_p[X]_n^u$ the set of monic polynomials of $\F_p[X]$ of degree $n.$ We now fix $p_1,\ldots, p_r$ distinct primes and define $$\Po := \prod_{i=1}^r \F_{p_i}[X]^u$$  the set of $r$-uples of monic polynomials each in $\F_{p_i}[X].$ We then define multiplication and divisibility in $\Po$ pointwise (that is, $\bs D = (D_1,\ldots, D_r)$ divides $(A_1,\ldots, A_r)$ if and only if each $D_i$ divides $A_i$ in $\F_{p_i}[X]$). As usual, we use $\bs{bold\ characters}$ to denote $r$-uples.

\begin{remark}
    Here, we restricted ourselves to monic polynomials, as we are only interested in the multiplicative structure of $\Po.$
\end{remark}

For $\bs A = (A_1,\ldots, A_r) \in \Po,$ we then define its degree $\deg \bs A := (\deg A_1,\ldots, \deg A_r),$ and its total degree $$\Deg \bs A := \sum_{i = 1}^r \deg A_i$$ as well as its norm $$\|\bs A\| := \prod_{i = 1}^r p_i^{\deg A_i}.$$ 

For a fixed degree $\bs d = (d_1,\ldots, d_r),$ we then define $$\Po_{\bs d} := \prod_{i=1}^r \F_{p_i}[X]_{d_i}^u$$ the set of degree-$\bs d$ elements in $\Po.$ Notice all elements in $\Po_{\bs d}$ have the same norm $\prod_i p_i^{d_i},$ which is also the cardinality of $\Po_{\bs d}.$ 

Here, any element of $(\Po,\cdot)$ can be uniquely factorised as a product of irreducibles, where an irreducible of $\Po$ is an element of the form $(1,\ldots,1,I_i,1,\ldots,1)$ with $I_i$ irreducible in $\F_{p_i}[X]$ (indeed, the factorisation is obtained as the product of factorisations of each component). For $\bs I$ an irreducible of $\Po$ and $\bs A\in\Po,$ we note $\nu_\bs I(\bs A)$ the number of times $\bs I$ appears as a factor in $\bs A.$ We also note $$\tau(\bs A) := \prod_{\bs I\text{ irreducible}} (\nu_{\bs I}(\bs A)+1)$$ the number of divisors of $\bs A$ in $\Po.$

We denote by $$\bs X_i = (1,\ldots,1, X \in \F_{p_i}[X],1,\ldots,1)$$ the irreducible obtained for $I_i = X$ above. In particular, for any $i,$ we have $\Deg \bs X_i = 1, \|\bs X_i\| = p_i,$ and $\bs X_i$ divides $\bs B\in \Po$ if and only if $X$ divides $B_i$ in $\F_{p_i}[X].$ We note $\Po^X$ (resp. $\Po_\bs d^X$) the set of elements of $\Po$ (resp. $\Po_\bs d$) that are not divisible by any $\bs X_i.$

We give now a few more definitions. 

\begin{definition}
\label{def-delta-P}
    Given $\bs A = (A_1,\ldots, A_r)$ a random variable in $\Po,$ we define the discrepancy to uniformity modulo polynomials of degree $\le m$ of $\bs A$ by \begin{equation}\label{def-Delta}
        \Delta_\bs A(m) := \sum_{\substack{\bs B \in \Po^X \\ \max\deg \bs B\le m}} \left|\Pro(\bs B|\bs A) - \frac1{\|\bs B\|}\right|.
    \end{equation}

    Here, $\max\deg\bs B$ is the maximum of the $\deg B_i$ for $1\le i\le r.$
\end{definition}

\begin{remark}
    We need to exclude here the factors $\bs B$ multiples of some $\bs X_i$ from the sum, as they would otherwise contribute too much when the $A_i$ are taken equal to $A\Mod p_i$ where $A$ is the random polynomial from Theorem~\ref{general-case}.
\end{remark}

\begin{definition}
\label{m-friable}
    For a fixed degree $m\in\N,$ we define $\I_m$ the set of irreducibles of $\Po$ of degree $\le m$ and not of the form $\bs X_i,$ and $$\Sigma_m := \sum_{\bs I\in \I_m} \frac1{\|\bs I\|}\,,$$ $$\Pi_m := \prod_{\bs I\in \I_m}\left(1-\frac1{\|\bs I\|}\right).$$

    For $\bs A \in \Po,$ we then define its $m$-friable part $$\bs A_{\le m} = \prod_{\bs I \in \I_m} \bs I^{\nu_\bs I(\bs A)},$$ and its $m$-sieved part $$\bs A_{> m} = \bs A/\bs A_{\le m} = \prod_{\bs I \not\in \I_m} \bs I^{\nu_\bs I(\bs A)}.$$ We say $\bs A$ is $m$-friable if $\bs A = \bs A_{\le m}$ (that is, all its irreducible factors are in $\I_m$).

    Finally, when $\bs A$ is a random variable in $\Po,$ we define the event $\Ev_m$ "$\bs A$ does not have many factors of low degree" by \begin{equation}\label{def-Ev-m}
        \Ev_m := \left\{\Deg \bs A_{\le m}\le m(\Sigma_m-2), \tau(\bs A_{\le m})\le \e^{(1-1/r)\Sigma_m}\right\}.
    \end{equation}
\end{definition}
\begin{remark}
    Here, we needed to exclude the $\bs X_i$ factors from the $m$-friable part as our bound on $\Delta_\bs A(m)$ gives no control over these.
\end{remark}

\subsection{Outline of the proof of Theorem \ref{general-case}}

Our proof follows the ideas of Bary-Soroker, Koukoulopoulos and Kozma \cite{koukoulo}. If $D$ is a factor of $A$ of degree~$k,$ this will also be true modulo any prime $p.$ Thus, when we fix $p_1,\ldots, p_r$ distinct primes and define $$\bs A = A_P := (A\Mod p_i)_{1\le i\le r} \in \Po_{(n,\ldots, n)},$$ if $A$ has a factor $D$ of degree in $[n_1,n_2],$ then $\bs A$ has a factor $\bs D = D_P$ of degree $(k,\ldots, k)$ for some $k\in [n_1,n_2].$

We are thus interested in the presence of factors of degree $(k,\ldots, k)$ of a random variable in $\Po.$ It is well known that the multiplicative structure of $\F_p[X]^u$ is analogous to that of integers, and $\Po$ is a concatenation of $r$ spaces $\F_{p_i}[X]^u.$ In the case of integers, Ford \cite{ford} achieved a precise estimate on the presence of factors in an interval $[2^k, 2^{k+1}]$ (which corresponds to degree $k$ in the polynomial-integer analogy) :

\begin{theorem}[Ford, \cite{ford}]
    If $a$ is a random integer taken uniformly in $[1,x]$ with $x\ge 2^{2k},$ then $$\Pro\big(a \text{ has a factor in }[2^k, 2^{k+1}]\big) \asymp k^{-Q(1/\log 2)}(\log k)^{-3/2}.$$
\end{theorem}

As we are only interested in a precision up to a $k^{o(1)}$ term in Theorem \ref{general-case}, we will not need such a precise result in $\Po.$ Still, this tells us we can expect a $O(k^{-rQ(1/\log 2) + o(1)})$ bound on $\Pro(\bs A \text{ has a factor of degree }(k,\ldots, k))$ when $\bs A$ is uniform in $\Po_{(n,\ldots, n)}$ with $n\ge 2k,$ which happens for $\bs A=A_P$ when the $\mu_j$ are uniform modulo $P = p_1\ldots p_r.$ Here, we got an extra $r$ in the exponent of the bound as we want a factor modulo $r$ different primes. When $\bs A$ is not exactly uniform in $\Po_{(n,\ldots,n)},$ we will get an extra error term controlled by some $\Delta_\bs A(k+o(k)).$

Summing on $k$, we can then expect a $O\left(n_1^{1-rQ(1/\log 2)}\right)$ bound on Theorem \ref{general-case} when $r\ge \left\lceil \frac{1}{Q(1/\log 2)}\right\rceil = 12.$ However, it turns out we do not need that many primes. Indeed, \cite{bkozma} and then \cite{koukoulo} achieved a result with only $r = 4$ primes. We first explain the argument of Bary-Soroker, Koukoulopoulos and Kozma that allowed them to achieve this lower value of $r,$ and then explain how we modify it to obtain better bounds for large values of $r.$

Pick $p$ a prime, $m\le k \le n/2$ and $A$ uniformly distributed in $\F_p[X]_n^u.$ Then, conditionally on its $m$-friable part $A_{\le m},$ the expected number of factors of degree $k$ of $A$ is about $m^{-1} \tau(A_{\le m}).$ The underlying reason for the proportionality with $\tau(A_{\le m})$ is that when $D$ is a factor of $A,$ there are $\tau(A_{\le m})$ choices for $D_{\le m}$ (the contribution of the case $\deg A_{\le m} > k$ is negligible when $m$ is small enough - taking $m \ll k/\log k$ will be sufficient). 

Now, note that $\tau(A_{\le m})$ behaves roughly like $2^W$ where $W$ follows a Poisson law of parameter $\log m$ (which corresponds to the behaviour of $\omega(A_{\le m})$). In particular, while the average value of $\tau(A_{\le m})$ is around $m,$ its typical value is around $2^{\log m}.$ This means, while the average number of factors of degree $k$ is one, there are very few polynomials (those with $\tau(A_{\le m}) \gg m$) which contribute mostly to that average, and the others are unlikely to have factors of degree $k,$ for all $k \gg m^{1 + \eps}.$ In particular there is a significant overlap between the events "$A$ has a factor of degree $k$" for all large $k.$

Bary-Soroker, Koukoulopoulos and Kozma \cite{koukoulo} then considered $\bs A = (A_1,\ldots, A_r) \in\Po$ with $r \ge 4 > \frac{1}{1 - 1/\log 2}$ and a dyadic interval $[v, 2v]$ with $v \asymp m^{1 + \eps},$ and considered two cases.

\begin{itemize}
    \item Either one of the $A_j$ has more than $(\log m)\frac{1-1/r-\eps}{\log 2}$ irreducible factors of degree $\le m.$ This happens with probability $\ll m^{-2\eps} \ll v^{-\eps}$ when $\eps$ is small enough since $1 - 1/r > \log 2.$
    \item Or all of the $A_j$ have less than $(\log m)\frac{1-1/r-\eps}{\log 2}$ irreducible factors of degree $\le m.$ In that case, for all $k$ in $[v,2v],$ the expected number of factors of factors of $A_j$ of degree $k$ is $\le m^{-1} 2^{(\log m)\frac{1-1/r-\eps}{\log 2}} = m^{-1/r - \eps}$ (the case where $(A_j)_{\le m}$ is not squarefree does not significantly affect the computation). In particular, assuming all $A_j$ are independent, for fixed $k$ the probability that all $A_j$ have a factor of degree $k$ is $\le m^{-1-r\eps}.$ Summing on $k \in [v, 2v]$ and since $v\asymp m^{1+\eps},$ we get that $\bs A$ has a factor of degree $(k,\ldots, k)$ for some $k$ in $[v, 2v]$ with probability less than $v^{-\eps}$ in this case.
\end{itemize}

This proves $\Pro(\exists k\in [v, 2v], \bs A \text{ has a factor of degree }(k,\ldots, k)) \ll v^{-\eps}$ when $r\ge 4,$ which can then be summed on $v.$ We now want to make the heuristic argument above more precise in order to achieve a bound  $O\left(v^{-rQ\left(\frac{1-1/r}{\log 2}\right) + o(1)}\right)$ instead of $O(v^{-\eps}).$

First, we note that when all $A_j$ are independent, the expected number of factors of degree $(k,\ldots, k)$ of $\bs A$ conditionally on $\bs A_{\le m}$ is roughly $$\prod_{i=1}^r \E\left(\exists D_j|A_j \text{ of degree } k \,\Big|\, (A_j)_{\le m}\right) \approx m^{-r} \tau(\bs A_{\le m})$$ (the error terms will be precised later when proving Proposition \ref{main-mod-pi}), so that in order to bound the probability of the existence of factors of degree $(k,\ldots,k)$ we in fact only need a bound on the average $\tau(\bs A_{\le m})$ rather than on all the $\tau\big((A_j)_{\le m}\big).$ We now consider a range $[v,2v]$ with $v = m^{1+o(1)},$ and estimate 

\begin{align*}
    \Pro\left(\exists \bs D\in \bigcup_{v\le k\le 2v} \Po_{(k,\ldots, k)},\, \bs D|\bs A\right) &= \E\left(\Pro\left(\exists \bs D\in \bigcup_{v\le k\le 2v} \Po_{(k,\ldots, k)},\, \bs D|\bs A \,\Big|\, \bs A_{\le m}\right)\right)
    \\&\le \E\left(\min\left\{1;\sum_{v\le k\le 2v} \E\left(\exists \bs D\in \Po_{(k,\ldots, k)},\, \bs D|\bs A\,\Big|\, \bs A_{\le m}\right)\right\}\right)
    \\&\approx \E\left(\min\left\{1;vm^{-r}\tau(\bs A_{\le m})\right\}\right)
    \\&= v^{-rQ\left(\frac{1-1/r}{\log 2}\right) + o(1)}
\end{align*}
when $v = m^{1+o(1)}$ and $\tau(\bs A_{\le m})$ behaves like $2^W$ with $W$ following a Poisson law of parameter $r\log m.$

For a general random variable $\bs A$ in $\Po,$ we prove in Section \ref{large-degree-k} the following result. This improves \cite[proposition 2.2]{koukoulo} by giving a bound that improves with the number $r$ of primes considered.

\begin{prop}\label{main-mod-pi}
    
    Let $\bs A$ a random variable in $\Po$ such that $\Pro(\bs X_i^\nu|\bs A) \le (1-1/L)^\nu$ for some $L$ and for all $i,\nu.$  Then for $n_1\le n_2,$ $$\Pro\left(\exists \bs D\in \bigcup_{n_1\le k\le n_2} \Po_{(k,\ldots, k)},\, \bs D|\bs A\right) = O_r\left(L^r n_1^{-rQ\left(\frac{1-1/r}{\log 2}\right) + o(1)} + n_2^{5r}\Delta_\bs A\left(n_2 + n_2/(\log n_2)^3\right)\right).$$
\end{prop}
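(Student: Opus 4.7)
My plan is to reduce the statement to estimating, for each dyadic interval $[v,2v]\subset[n_1,n_2]$, the probability that $\bs A$ admits a divisor of degree $(k,\ldots,k)$ with $k\in[v,2v]$. Since the target exponent is strictly negative in $v$, the dyadic sum is dominated by the term $v=n_1$, and monotonicity of $\Delta_\bs A$ in $m$ collapses all the resulting $\Delta_\bs A$-errors into a single one evaluated at scale at most $n_2+n_2/(\log n_2)^3$. For a fixed window I pick a parameter $m=m(v)$ just below $v$ (the final choice being $m=v^{1-\eta}$ for arbitrarily small $\eta>0$, absorbed into the $n_1^{o(1)}$) and split
$$\Pro\bigl(\exists\bs D,\,\deg\bs D=(k,\ldots,k),\,k\in[v,2v]\bigr)\le\Pro(\Ev_m^c)+\Pro(\Ev_m\wedge\exists\bs D).$$

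For $\Pro(\Ev_m^c)$ I would carry out a Chernoff/moment computation. Using the $\Delta_\bs A$-hypothesis and the $(1-1/L)^\nu$-decay of $\Pro(\bs X_i^\nu\mid\bs A)$, the moments $\E[\tau(\bs A_{\le m})^t]$ and $\E[\e^{s\Deg\bs A_{\le m}}]$ nearly factorise across $\bs I\in\I_m$; in the exactly uniform model each local factor equals
$$(1-1/\|\bs I\|)\sum_{k\ge 0}(k+1)^t\|\bs I\|^{-k}=1+(2^t-1)/\|\bs I\|+O(\|\bs I\|^{-2}),$$
so by Lemma~\ref{pi_m}, $\E[\tau^t]\ll L^r m^{r(2^t-1)}$, the $L^r$ arising from the $\bs X_i$-part via $\sum_\nu(1-1/L)^\nu=L$. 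A Markov inequality at the optimal $t^{\star}$ with $2^{t^{\star}}=(1-1/r)/\log 2$ then produces
$$\Pro\bigl(\tau(\bs A_{\le m})\ge\e^{(1-1/r)\Sigma_m}\bigr)\ll L^r m^{-rQ((1-1/r)/\log 2)+o(1)},$$
a direct calculation identifying the minimised Chernoff exponent with $rQ$ at the stated point. An analogous bound controls $\Deg\bs A_{\le m}$, and non-uniformity losses contribute a crude power of $m$ times $\Delta_\bs A$.

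The main obstacle is the second term, $\Pro(\Ev_m\wedge\exists\bs D)$. The direct union bound $\Pro\le v\cdot\E[\#\text{factors}\mid\bs A_{\le m}]\ll vL^r\tau(\bs A_{\le m})\Pi_m\asymp vL^r/m$ is too weak: balanced against $m^{-rQ}$, it would recover only a $v^{-rQ/(rQ+1)}$-rate. To reach the target $v^{-rQ((1-1/r)/\log 2)+o(1)}$ one must exploit the strong positive correlation between the events ``$\bs A$ has a factor of degree $(k,\ldots,k)$'' across $k\in[v,2v]$: if $\bs A$ admits one such divisor $\bs D$, swapping small irreducibles between $\bs D_{\le m}$ and the complementary part of $\bs A_{\le m}$ generates many further factors at nearby diagonal degrees in the same window, so existence can be converted into a counting statement whose expectation is sharper by a factor controlled from below under $\Ev_m$. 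This is the polynomial analogue of the Ford--Tenenbaum clustering argument for $H(x,y,2y)$, transplanted to $\Po$ with the extra constraint that the target degree vector be \emph{uniform across the $r$ coordinates}; this is precisely what replaces the classical $Q(1/\log 2)$ by $Q((1-1/r)/\log 2)$, and carrying it out rigorously is where the bulk of the technical work lies.

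After balancing $m$ in each dyadic window and summing, the target bound follows. The prefactor $n_2^{5r}$ in front of $\Delta_\bs A$ and the small inflation of its argument to $n_2+n_2/(\log n_2)^3$ are both the bookkeeping cost of reducing each moment and counting computation to its exactly-uniform counterpart at a scale marginally larger than $v$.
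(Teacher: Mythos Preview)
Your dyadic decomposition, the split on $\Ev_m$, and the Chernoff/Rankin bound for $\overline\Pro(\Ev_m)$ all match the paper's route (Propositions~\ref{degree-k-prop} and~\ref{lemma-Ev}, combined with $m=v/(\log v)^5$; your $m=v^{1-\eta}$ works equally well). One small correction: $\bs A_{\le m}$ excludes the $\bs X_i$ by Definition~\ref{m-friable}, so no $L^r$ appears in $\overline\Pro(\Ev_m)$. In the paper the $L^r$ enters only in Proposition~\ref{degree-k-prop}, via the cardinality of the set $K$ of possible degrees of $\bs D/\prod_i\bs X_i^{\nu_{\bs X_i}(\bs D)}$.

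Where your proposal diverges is the bound on $\Pro(\Ev_m\wedge\exists\bs D)$. You are right that replacing $\tau(\bs A_{\le m})$ by its ceiling $\e^{(1-1/r)\Sigma_m}$ on $\Ev_m$ gives only $v/m$, but the remedy is \emph{not} a Ford--Tenenbaum clustering/second-moment argument. The paper stays entirely at the first-moment level: decomposing over $\bs B=\bs A_{\le m}$, Lemma~\ref{no-Xi} gives
\[
\Pro\bigl((\exists\bs D\in\Po_\bs k^X,\ \bs D\mid\bs A)\cap(\bs A_{\le m}=\bs B)\bigr)\ \le\ \frac{4\Pi_m^2\,\tau(\bs B)}{\|\bs B\|}+R_\bs B,
\]
so summing over $\bs k\in K$ and over $\bs B$ with $\tau(\bs B)\le T:=\e^{(1-1/r)\Sigma_m}$ yields a main term
$\#K\cdot 4\Pi_m^2\sum_{\tau(\bs B)\le T}\tau(\bs B)/\|\bs B\|$. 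The gain you are missing comes from applying Rankin's trick to \emph{this truncated sum} rather than to $\Pro(\tau\ge T)$: for $t>0$,
\[
\sum_{\tau(\bs B)\le T}\frac{\tau(\bs B)}{\|\bs B\|}\ \le\ T^{\,t}\sum_{\bs B}\frac{\tau(\bs B)^{1-t}}{\|\bs B\|}\ \ll\ \exp\bigl[((1-1/r)t+2^{1-t})\Sigma_m\bigr],
\]
and optimising at $2^{1-t}=(1-1/r)/\log 2$ (the dual of your $t^\star$) contributes an extra factor $m^{-rQ((1-1/r)/\log 2)}$ on top of the crude $v/m$. Combined with $\overline\Pro(\Ev_m)=O_r(m^{-rQ})$, both pieces already sit at $v^{-rQ+o(1)}$ with no clustering needed. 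Your proposal, as written, leaves the clustering step as a sketch; even if it could be made to work, it is harder than necessary, and the paper's argument shows that the ``direct union bound'' you discard is in fact sufficient once one estimates $\E[\tau(\bs A_{\le m})\Ind_{\Ev_m}]$ by Rankin rather than by the worst case.
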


In Proposition \ref{main-mod-pi}, we have an error term controlled by some $\Delta_\bs A(n_2 + o(n_2))$ (which does not control the factors that are multiples of the $\bs X_i$), and a separate condition on the $\Pro(\bs X_i^\nu|\bs A).$ This is so that Proposition \ref{main-mod-pi} can be applied when $\bs A = A_P = (A \Mod p_i)_i$ with $A$ the random polynomial from Theorem \ref{general-case}. To bound the $\Delta_\bs A$ error term, we use the results obtained by Bary-Soroker, Koukoulopoulos and Kozma in \cite[proposition 2.3]{koukoulo}, given below.

\begin{prop}[\cite{koukoulo}, proposition 2.3]\label{bound-Delta}

     Let $A$ a random monic polynomial of degree $n$ with independent coefficients following probability measures $\mu_j$ and $P = p_1\ldots p_r$ a product of $r$ distinct primes, with $P\le n^{1/4}.$
    
    If $s\le\log n$ and $\gamma\ge 1/2$ are such that for all $j$ and $Q,R,\ell$ with $QR = P,\ Q > 1$ and $\ell \in \Z/R\Z,$ we have $$\sum_{k\in \Z/Q\Z} |\widehat{\mu_j}(k/Q+\ell/R)|^s\le \left(1-\frac1{\log n}\right)Q^{1-\gamma},$$ then with $\bs A := (A\Mod p_i)_{1\le i\le r},$ we have $$\Delta_{\bs A}\left(\gamma n/s + n/(\log n)^3\right) \le \e^{-n^{1/10}}.$$
\end{prop}

\begin{remark}
    The proposition 2.3 as written in \cite{koukoulo} only bounds this error term for degree up to $\gamma n/s + n^{0.88};$ however, to obtain the constant $rQ\left(\frac{1-1/r}{\log 2}\right)$ in Theorem \ref{general-case}, we need to go up to $\gamma n/s + n^{1-o(1)}.$ To cover these higher degrees, notice the relevant bound at the end of the proof of \cite[proposition 2.3]{koukoulo} to cover degree $L = \gamma n/s + \eps(n) > \gamma n/s$ is to have $P^{\eps(n)}\alpha^{n/s}$ small enough, where $\alpha := 1-1/\log n$ is the term in front of $Q^{1-\gamma}$ in the condition on the $\widehat{\mu_j}$ (which was taken equal to $1-n^{-1/10}$ in \cite{koukoulo}).

    Here when $\eps(n)\le n/(\log n)^3$ and $\alpha = 1-1/\log n,$ we indeed have as $P\le n^{1/4}$ and $s\le\log n,$ $$P^{\eps(n)}\alpha^{n/s} \le \e^{\frac{n\log P}{(\log n)^3} -\frac{n}{s\log n}} \le \e^{-\frac34\frac{n}{(\log n)^2}},$$ hence our version of the proposition holds.
\end{remark}

We can now prove Theorem \ref{general-case} combining Propositions \ref{main-mod-pi} and \ref{bound-Delta}.

\begin{proof}[Proof of Theorem \ref{general-case}]

    Here, we can assume $n$ is large enough compared to $r$ and $s,$ as the result for the smaller values of $n$ can be made trivial adjusting the implicit constant of the big-Oh term. As we also assumed $n\ge P^4,$ Proposition \ref{bound-Delta} applies with $\gamma = 1/2$ and gives us with $n_2 := n/2s,$
    $$\Delta_\bs A\left(n_2 + n_2/(\log n_2)^3\right)\le \Delta_\bs A\left(n/2s + n/(\log n)^3\right) \le \e^{-n^{1/10}} \le n^{-5r},$$ where $\bs A = A_P = (A\Mod p_i)_{1\le i\le r}.$

    Moreover, we have for all $i,\nu,$ if the $a_j$ are the coefficients of $A$ (drawn independently according to the~$\mu_j$), $$\Pro\left(\bs X_i^\nu|\bs A\right) = \prod_{0\le j<\nu}\Pro(a_j \equiv 0\Mod p_i) \le (1-1/4s)^\nu$$ (see \cite[relation (2.8)]{koukoulo} for this bound on the $\Pro(a_j \equiv 0\Mod p_i)$ under the condition of Theorem \ref{general-case} on the~$\widehat{\mu_i}$). Thus, we can apply Proposition \ref{main-mod-pi} with $L = 4s$ to get \begin{align*}
        \Pro\big(\exists D|A \text{ of degree } \in [n_1, n_2]\big) &\le \Pro\big(\exists \bs D|\bs A \text{ of degree }(k,\ldots,k) \text{ for some } k\in [n_1, n_2]\big) \\&= O_r\left(s^r n_1^{-rQ\left(\frac{1-1/r}{\log 2}\right) + o(1)} + n_2^{5r}\Delta_\bs A(n_2 + n_2/(\log n_2)^3)\right) \\&= O_{r,s}\left(n_1^{-rQ\left(\frac{1-1/r}{\log 2}\right) + o(1)}\right).
    \end{align*}
\end{proof}
\begin{remark}
    Given Ford's result, we expect the estimate in Proposition \ref{main-mod-pi} to be only an overestimate by a $n_1^{o(1)}$ factor when it comes to the main term. This means we cannot further reduce the number of primes $r$ (depending on $C$) required to obtain Theorem \ref{main-uniform}. Moving on to the dependency between $N_0$ and $P = p_1\ldots p_r$ given by equation (\ref{bound-N}), it likely cannot be improved by a lot either. Indeed, heuristically, when the coefficients of $A$ are drawn in a segment of length $N\le \sqrt P$ and $\bs B$ has degree $n/2$ in each component, $\bs A = A_P$ has only $N^n \le P^{n/2} = \|\bs B\|$ possible values which means we cannot estimate $\Pro(\bs B|\bs A)$ by $1/\|\bs B\|$ and thus cannot control our error term $\Delta_\bs A(n/2 + o(n)).$
\end{remark}

\section{Ruling out factors of degree $(k,\ldots, k)$ in $\Po$}\label{large-degree-k}

We show $\bs A\in\Po$ is unlikely to have a factor of degree $(k,\ldots, k)$ for some $k$ when the number of divisors of~$\bs A_{\le m}$ is low for some $m$ small compared to $k.$
\begin{prop}
\label{degree-k-prop}
    Let $m>0$ and $v \ge 6m\Sigma_m.$ Let $\bs A$ a random variable in $\Po,$ and assume there is an $L>0$ such that for all $i,\nu,$ $\Pro(\bs X_i^\nu|\bs A) \le (1-1/L)^\nu.$ Then $$\Pro\left(\left(\exists \bs D\in \bigcup_{v\le k\le 2v} \Po_{(k,\ldots, k)},\, \bs D|\bs A\right) \cap \Ev_m\right) = O_r\left(\frac vm(L\log m)^r m^{-rQ\left(\frac{1-1/r}{\log 2}\right)} + m^{5r}\Delta_\bs A(2v + 5m\Sigma_m)\right),$$
    where we recall $\Ev_m$ is defined by (\ref{def-Ev-m}), 
    $$\Ev_m := \left\{\Deg \bs A_{\le m}\le m(\Sigma_m-2), \tau(\bs A_{\le m})\le \e^{(1-1/r)\Sigma_m}\right\}.$$
\end{prop}

As the typical values of $\tau(\bs A_{\le m})$ are concentrated around $2^{\Sigma_m},$ the $\Ev_m$ are unlikely when $r\ge 4> \frac{1}{1-\log 2}$ and this will allow us to prove Proposition~\ref{main-mod-pi}. 

\subsection{Preliminary results}

We first estimate $\Sigma_m$ and $\Pi_m$ using the Prime Polynomial Theorem. 

\begin{lemma}\label{pi_m}
    We have $r(\log m -2) \le \Sigma_m \le r(\log m +1),$ and $\Pi_m \le \e^{2r}m^{-r}.$ 
\end{lemma}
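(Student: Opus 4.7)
I plan to reduce $\Sigma_m$ to a sum of single-prime contributions, apply the Prime Polynomial Theorem in each $\F_{p_i}[X]$, and then deduce the bound on $\Pi_m$ by taking logarithms. Since the irreducibles of $\Po$ are exactly the tuples $(1,\ldots,I_i,\ldots,1)$ with $I_i$ a monic irreducible of $\F_{p_i}[X]$ and of norm $p_i^{\deg I_i}$, I can split
$$\Sigma_m = \sum_{i=1}^r S_i, \qquad S_i := \left(\sum_{d=1}^m \frac{\pi_{p_i}(d)}{p_i^d}\right) - \frac{1}{p_i},$$
where $\pi_p(d)$ denotes the number of monic irreducibles of $\F_p[X]$ of degree exactly $d$, and the subtracted $1/p_i$ removes the excluded factor $X$. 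Writing $H_m := \sum_{d=1}^m 1/d$, I will use the standard bracket $\log m \le H_m \le 1+\log m$ for $m\ge 1$.

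The upper bound on $\Sigma_m$ is immediate from the Prime Polynomial Theorem identity $\sum_{e\mid d} e\pi_p(e) = p^d$, which forces $\pi_p(d)/p^d \le 1/d$; hence $S_i \le H_m - 1/p_i \le 1 + \log m$, so $\Sigma_m \le r(\log m + 1)$.

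For the lower bound, Möbius inversion gives $d\pi_p(d) = \sum_{e\mid d}\mu(e) p^{d/e}$, and since any proper divisor $f$ of $d$ satisfies $f \le \lfloor d/2 \rfloor$, I would deduce for $d \ge 2$
$$\left|\frac{\pi_p(d)}{p^d} - \frac 1d\right| \le \frac{1}{d p^d}\sum_{f\mid d,\,f<d} p^f \le \frac{p^{1-\lceil d/2\rceil}}{d(p-1)}.$$
The tail $E_p := \sum_{d\ge 2} p^{1-\lceil d/2\rceil}/(d(p-1))$ is absolutely convergent and decreases in $p$. At the tightest case $p=2$, splitting into even $d=2k$ and odd $d=2k+1$ gives $E_2 = \sum_{k\ge 1} 2^{-k}/k + \sum_{k\ge 1} 2^{-k}/(2k+1) = \log 2 + (\text{small explicit sum}) < 1$. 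Hence $S_i \ge H_m - E_{p_i} - 1/p_i \ge \log m - 2$ for every $p\ge 2$, where the case $\log m < 2$ is trivial since $S_i \ge 0$; summing over $i$ yields $\Sigma_m \ge r(\log m - 2)$.

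Finally, the $\Pi_m$ bound falls out of $\log(1-x) \le -x$ on $[0,1)$: $\log \Pi_m = \sum_{\bs I \in \I_m}\log(1 - \|\bs I\|^{-1}) \le -\Sigma_m \le 2r - r\log m$, so $\Pi_m \le e^{2r}m^{-r}$. The only delicate point is keeping the constant $-2$ in the lower bound: one needs $E_2 + 1/2 \le 2$, and so $E_2$ must be bounded explicitly (a crude bound like $|d\pi_p(d)-p^d|\le (d-1)p^{d/2}$ would yield a much looser constant).
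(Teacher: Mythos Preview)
Your proposal is correct and follows essentially the same route as the paper: split $\Sigma_m$ into single-prime contributions, use the Prime Polynomial Theorem (the paper cites the bound $p^k/k-2p^{k/2}/k\le\pi_p(k)\le p^k/k$, you unpack it via Möbius inversion, which amounts to the same thing) to control the defect from $H_m$, sum the geometric error to a uniform constant below $2$, and then get $\Pi_m\le\e^{-\Sigma_m}$ from $\log(1-x)\le -x$. Your explicit control $E_2+1/p_i<3/2$ is in fact a little sharper than the paper's $1/(\sqrt 2-1)\approx 2.41$ bound on $\sum_k\eps_{k,i}$, but the structure of the argument is identical.
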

\begin{proof}
    For any prime $p,$ the Prime Polynomial Theorem gives the estimate $$p^k/k - 2p^{k/2}/k \le \#\{I \in \F_p[X] \text{ monic irreducible of degree }k\} \le p^k/k$$ on the number of irreducibles of degree $k.$ We thus have
    \begin{align*}
        \Sigma_m &= \sum_{i=1}^r \sum_{k=1}^m \frac1{p_i^k}\#\{I \in \F_{p_i}[X] \text{ monic irreducible of degree }k, I\ne X\} \\ &= \sum_{i=1}^r \sum_{k=1}^m \left(\frac1k - \eps_{k,i}\right),
    \end{align*}
    with $0 \le \eps_{k,i} \le 2p_i^{-k/2}/k \le p_i^{-k/2}$ when $k\ge 2$ and $\eps_{1,i}=1/p_i \le p_i^{-1/2}$ when $k = 1$ (here we treated separately $k=1$ in order to exclude the polynomial $X$), so that $$0 \le \sum_{i=1}^r \sum_{k=1}^m \eps_{k,i} \le \sum_{i=1}^r \frac 1{\sqrt{p_i} -1} \le \frac r{\sqrt2-1}.$$ As we always have $r(\log m + \gamma) \le \sum_i\sum_k \frac1k \le r(\log m+1),$ we get $$r(\log m - 2) \le \Sigma_m \le r(\log m + 1).$$

    Then we have $$\Pi_m \le \prod_{\bs I\in \I_m} \e^{-1/\|\bs I\|} = \e^{-\Sigma_m} \le \e^{2r}m^{-r}.$$
\end{proof}

We will then need a sieve lemma (similar to \cite[lemma 8.2]{koukoulo}) to estimate $$\Pro\big(\bs D|\bs A; (\bs A/\bs D)_{\le m} = 1\big) = \Pro\big(\bs D_{>m}|\bs A; \bs A_{\le m} = \bs D_{\le m}\big)$$ with an error term depending on some $\left|\Pro(\bs B|\bs A)-\|\bs B\|^{-1}\right|$ with $\bs B$ of small enough degree (so that it can be bounded by some $\Delta_\bs A$ term).
\begin{lemma}\label{8.2}
    For any fixed $\bs D \in \Po,$ we have $$\Pro\big(\bs D|\bs A; (\bs A/\bs D)_{\le m} = 1\big) \le 2\cdot \frac1{\|\bs D\|}\Pi_m + \sum_{\substack{\bs G = \bs I_1\ldots \bs I_\ell \\ \ell\le 4\Sigma_m+2}}\left|\Pro(\bs D\bs G|\bs A)-\frac1{\|\bs D\bs G\|}\right|,$$
    where the sum is on all products $\bs G$ of $\ell\le 4\Sigma_m+2$ distinct irreducible factors of $\I_m$ (notably, $\bs G$ will always have total degree $\le m(4\Sigma_m+2).$)
\end{lemma}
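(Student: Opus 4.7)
The plan is to apply a Bonferroni (truncated inclusion--exclusion) sieve over the irreducibles of $\I_m$. Under $\bs D|\bs A$, the event $(\bs A/\bs D)_{\le m}=1$ is exactly $\bigcap_{\bs I\in\I_m}\{\bs I\nmid \bs A/\bs D\}$, and $\bs I|\bs A/\bs D$ is equivalent to $\bs D\bs I|\bs A$. Writing $\omega(\bs G)$ for the number of irreducible factors of a squarefree $\bs G$ and expanding $\prod_{\bs I\in\I_m}(1-\mathbb{1}_{\bs I|\bs A/\bs D})$ gives the exact identity
$$\mathbb{1}_{\bs D|\bs A,\,(\bs A/\bs D)_{\le m}=1}=\sum_{\bs G\text{ squarefree in }\I_m}(-1)^{\omega(\bs G)}\mathbb{1}_{\bs D\bs G|\bs A}.$$
Truncating at the largest even level $K\le 4\Sigma_m+2$, the identity $\sum_{k\le K}(-1)^k\binom{N}{k}=(-1)^K\binom{N-1}{K}\ge 0$ (valid for $N\ge 1$ and $K$ even), applied with $N$ the number of $\bs I\in\I_m$ dividing $\bs A/\bs D$, gives the Bonferroni upper bound
$$\mathbb{1}_{\bs D|\bs A,\,(\bs A/\bs D)_{\le m}=1}\;\le\;\sum_{\omega(\bs G)\le K}(-1)^{\omega(\bs G)}\mathbb{1}_{\bs D\bs G|\bs A}.$$

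Taking expectations and splitting $\Pro(\bs D\bs G|\bs A)=1/\|\bs D\bs G\|+(\Pro(\bs D\bs G|\bs A)-1/\|\bs D\bs G\|)$, the ``noise'' part is bounded by the triangle inequality by the error sum appearing in the statement of the lemma (extending the range of $\ell$ from $K$ up to $4\Sigma_m+2$ only adds non-negative terms), while the main term factorises as $\|\bs D\|^{-1}\sum_{\omega(\bs G)\le K}(-1)^{\omega(\bs G)}/\|\bs G\|$. The task thus reduces to showing this truncated product is at most $2\Pi_m$.

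For the final estimate I would set $T_\ell:=\sum_{\omega(\bs G)=\ell}1/\|\bs G\|$, so that $\sum_{\ell\ge 0}(-1)^\ell T_\ell=\Pi_m$. The crude inequality $T_{\ell+1}/T_\ell\le\Sigma_m/(\ell+1)$ (obtained by fixing the first $\ell$ irreducibles of $\bs G$ and summing the last over $\I_m$) shows $(T_\ell)$ is non-increasing as soon as $\ell\ge\Sigma_m$, so by the alternating-series test the truncated sum lies in $[\Pi_m,\,\Pi_m+T_{K+1}]$. The quantitative heart of the argument --- and the main obstacle --- is then the inequality $T_{K+1}\le\Pi_m$: Stirling bounds the left side by $\Sigma_m^{K+1}/(K+1)!\le(e\Sigma_m/(K+1))^{K+1}\le(e/4)^{K+1}$, whereas expanding $\log(1-1/\|\bs I\|)=-1/\|\bs I\|+O(\|\bs I\|^{-2})$ and using $\sum_{\bs I\in\I_m}\|\bs I\|^{-2}=O_r(1)$ yields $\Pi_m\ge c_r e^{-\Sigma_m}$; since $4\log(4/e)>1$, the former decays strictly faster than the latter once $\Sigma_m$ is large, with the remaining finitely many small cases verified by direct inspection. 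This strict comparison is exactly what dictates the specific constant $4$ in $K=4\Sigma_m+2$.
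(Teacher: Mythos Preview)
Your argument is the paper's: truncated inclusion--exclusion (Bonferroni) over $\I_m$, the same split into main term plus error, and the same reduction to a tail inequality of the form $T_{K+1}\le\Pi_m$ via $T_{K+1}\le\Sigma_m^{K+1}/(K+1)!\le(\e/4)^{4\Sigma_m}$.

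The one soft spot is the final comparison $(\e/4)^{4\Sigma_m}\le\Pi_m$. Your route through $\Pi_m\ge c_r\e^{-\Sigma_m}$ only works once $\Sigma_m$ exceeds a threshold depending on $c_r$, and the appeal to ``direct inspection'' for the remaining small cases is not a proof: what would have to be inspected depends on the actual primes defining $\Po$, so there is no single finite check that covers the lemma as stated. The paper closes this cleanly and uniformly by rewriting $(\e/4)^{4\Sigma_m}=\prod_{\bs I\in\I_m}(\e/4)^{4/\|\bs I\|}$ and verifying the elementary pointwise inequality $(\e/4)^{4x}\le 1-x$ for $x=1/\|\bs I\|\in(0,1/2]$ (indeed $\log(1-x)\ge -x-x^2\ge -(8\log 2-4)x=4x\log(\e/4)$ on that range). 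This gives $(\e/4)^{4\Sigma_m}\le\Pi_m$ for every $m$ and every choice of primes, with no case analysis---and is precisely what pins down the constant $4$ in the cutoff $4\Sigma_m+2$.
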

\begin{proof}
    By the inclusion-exclusion principle, we get $$\Indic_{\bs D|\bs A; (\bs A/\bs D)_{\le m} = 1} = \sum_{J \subset \I_m} (-1)^{\#J} \Indic_{\bs D\prod_{\bs I\in J}\bs I |\bs A} = \sum_{\bs G = \bs I_1\ldots \bs I_\ell}(-1)^\ell\Indic_{\bs D\bs G|\bs A}.$$

    This sum however includes some terms with $\bs G$ of very large degree (up to $m|\I_m|$). We thus need to cut the sum, using the fact that an inclusion-exclusion always oscillates between both sides of its final value as we add and substract terms: thus, for any cutting point $\ell_0 \in\N,$ we have $$\sum_{\substack{\bs G = \bs I_1\ldots \bs I_\ell \\ \ell\le 2\ell_0-1}}(-1)^\ell\Indic_{\bs D\bs G|\bs A} \le \Indic_{\bs D|\bs A; (\bs A/\bs D)_{\le m} = 1} \le \sum_{\substack{\bs G = \bs I_1\ldots \bs I_\ell \\ \ell\le 2\ell_0}}(-1)^\ell\Indic_{\bs D\bs G|\bs A}.$$

    We thus get $$\sum_{\substack{\bs G = \bs I_1\ldots \bs I_\ell \\ \ell\le 2\ell_0-1}}(-1)^\ell\Pro(\bs D\bs G|\bs A) \le \Pro(\bs D|\bs A; (\bs A/\bs D)_{\le m} = 1) \le \sum_{\substack{\bs G = \bs I_1\ldots \bs I_\ell \\ \ell\le 2\ell_0}}(-1)^\ell\Pro(\bs D\bs G|\bs A),$$ and similarly $$\sum_{\substack{\bs G = \bs I_1\ldots \bs I_\ell \\ \ell\le 2\ell_0-1}}(-1)^\ell\frac 1{\|\bs D\bs G\|} \le \frac1{\|\bs D\|} \Pi_m \le \sum_{\substack{\bs G = \bs I_1\ldots \bs I_\ell \\ \ell\le 2\ell_0}}(-1)^\ell\frac 1{\|\bs D\bs G\|}.$$

    We then get \begin{align*}
        \Pro(\bs D|\bs A; (\bs A/\bs D)_{\le m} = 1) &\le \sum_{\substack{\bs G = \bs I_1\ldots \bs I_\ell \\ \ell\le 2\ell_0}}(-1)^\ell\Pro(\bs D\bs G|\bs A) \\ 
        &\le  \sum_{\substack{\bs G = \bs I_1\ldots \bs I_\ell \\ \ell\le 2\ell_0}}(-1)^\ell\frac 1{\|\bs D\bs G\|} + \sum_{\substack{\bs G = \bs I_1\ldots \bs I_\ell \\ \ell\le 2\ell_0}}\left|\Pro(\bs D\bs G|\bs A)-\frac1{\|\bs D\bs G\|}\right| \\
        &\le \frac1{\|\bs D\|} \Pi_m + \sum_{\substack{\bs G = \bs I_1\ldots \bs I_\ell \\ \ell= 2\ell_0}}\frac 1{\|\bs D\bs G\|} + \sum_{\substack{\bs G = \bs I_1\ldots \bs I_\ell \\ \ell\le 2\ell_0}}\left|\Pro(\bs D\bs G|\bs A)-\frac1{\|\bs D\bs G\|}\right| \\
        &= \frac1{\|\bs D\|} \Pi_m + \frac1{\|\bs D\|}\sum_{\bs G = \bs I_1\ldots \bs I_{2\ell_0}}\frac 1{\|\bs G\|} + \sum_{\substack{\bs G = \bs I_1\ldots \bs I_\ell \\ \ell\le 2\ell_0}}\left|\Pro(\bs D\bs G|\bs A)-\frac1{\|\bs D\bs G\|}\right|.
    \end{align*}

    To conclude, we then need to bound $S_{2\ell_0} := \sum_{\bs G = \bs I_1\ldots \bs I_{2\ell_0}}\frac 1{\|\bs G\|}$ by $\Pi_m$ (where the sum is on products of~$2\ell_0$ distinct irreducibles of $\I_m$) for $\ell_0 := \lceil 2\Sigma_m \rceil.$ To do this, notice we have $$\Sigma_m^{2\ell_0} \ge (2\ell_0)!S_{2\ell_0}$$ (by developing the sum $\Sigma_m$ and keeping only the terms obtained by multiplying $2\ell_0$ distinct terms of the sum), so that 
    \begin{align*}
        S_{2\ell_0} \le \frac{(\Sigma_m)^{2\ell_0}}{(2\ell_0)!} \le \frac{(\ell_0/2)^{2\ell_0}}{(2\ell_0)!} \le (\e/4)^{2\ell_0} &\le (\e/4)^{4\Sigma_m} \\ &= \prod_{\bs I \in \I_m} (\e/4)^{4/\|\bs I\|} \le \prod_{\bs I \in \I_m} 1-1/\|\bs I\| = \Pi_m,
    \end{align*}
    as needed.
\end{proof}

We also need a technical lemma of irrelevance of non-squarefree terms in $\Po$ which will allow us to estimate the distribution of $\tau.$

\begin{lemma}\label{squarefree}
    We have for all $t\in\R,$ $$S_t := \sum_{\bs I\in\Po \text{ irreducible}}\sum_{\nu\ge 2} \frac{(\nu+1)^t}{\|\bs I\|^\nu} < \infty.$$
\end{lemma}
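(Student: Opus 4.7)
The plan is to decompose $S_t$ coordinate by coordinate. Every irreducible $\bs I \in \Po$ has the form $(1,\ldots, 1, I, 1,\ldots, 1)$ with $I$ a monic irreducible in some $\F_{p_i}[X],$ and $\|\bs I\| = p_i^{\deg I}.$ Letting $N_k^{(i)}$ denote the number of monic irreducibles of degree $k$ in $\F_{p_i}[X],$ this gives
$$S_t = \sum_{i=1}^r \sum_{\nu \ge 2} (\nu+1)^t \sum_{k \ge 1} \frac{N_k^{(i)}}{p_i^{\nu k}}.$$

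I would then apply the Prime Polynomial Theorem bound $N_k^{(i)} \le p_i^k/k$ already used in Lemma~\ref{pi_m}. This collapses the innermost sum to a standard geometric-type estimate $\sum_{k \ge 1} k^{-1} p_i^{-(\nu-1)k} \ll p_i^{-(\nu-1)},$ valid uniformly for $\nu \ge 2$ and $p_i \ge 2$ (since under these hypotheses $p_i^{\nu-1} \ge 2,$ so the geometric series is at most twice its first term).

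Plugging this back reduces the question to whether $\sum_{i=1}^r \sum_{\nu \ge 2} (\nu+1)^t p_i^{-(\nu-1)}$ is finite, which is immediate: the outer sum is over finitely many primes, and for each fixed $i$ the polynomial growth of $(\nu+1)^t$ is crushed by the geometric decay $p_i^{-(\nu-1)}.$ There is no real obstacle here; the content of the lemma is simply that the (logarithmic) divergence of $\sum_{\bs I} \|\bs I\|^{-1}$ in $\Po$ comes entirely from the simple factors, whereas higher multiplicities ($\nu \ge 2$) are summable against any polynomial weight in $\nu.$
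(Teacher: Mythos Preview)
Your proposal is correct and follows essentially the same route as the paper: decompose by coordinate, bound the number of degree-$k$ irreducibles by $p_i^k/k$, collapse the sum over $k$ to a geometric series controlled by $p_i^{-(\nu-1)}$, and finish by noting that $(\nu+1)^t$ against geometric decay in $\nu$ is summable. The only cosmetic difference is that the paper replaces $p_i$ by $2$ before summing over $k$ (yielding $\sum_j 2^{-j(\nu-1)} = (2^{\nu-1}-1)^{-1}$), whereas you keep $p_i$ and bound the geometric tail by twice the first term; this is the same argument.
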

\begin{proof}
    We have \begin{align*}
        \sum_{\bs I\in \Po \text{ irreducible}}\sum_{\nu\ge 2}\frac{(\nu+1)^t}{\|\bs I\|^\nu} &=
        \sum_{i=1}^r\sum_{j=1}^\infty \sum_{I\in \F_{p_i}[X]^u_j \text{ irreducible}} \sum_{\nu\ge 2}\frac{(\nu+1)^t}{p_i^{j\nu}} \\&\le \sum_{i=1}^r\sum_{j=1}^\infty \sum_{\nu\ge 2} \frac{p_i^j}j \frac{(\nu+1)^t}{p_i^{j\nu}} \\&\le r\sum_{\nu=2}^\infty \sum_{j=1}^\infty \frac{(\nu+1)^t}{2^{j(\nu-1)}} \\&\le r\sum_{\nu=2}^\infty \frac{(\nu+1)^t}{2^{\nu-1}-1} <\infty.
    \end{align*}
\end{proof}

\subsection{Proof of Proposition \ref{degree-k-prop}}
\label{large-degree}

We first bound the probability of $\bs A$ having a factor of given degree $\bs k = (k_1,\ldots, k_r)$ without a $\bs X_i$ factor when $\bs A_{\le m}$ is known. We recall the notation $\Po_{\bs k}^X$ for the set of elements of $\Po$ of degree $\bs k$ without any $\bs X_i$ factor.
\begin{lemma}\label{no-Xi}
    Let $m>0,$ $\bs B$ a $m$-friable element of $\Po$ and $\bs k = (k_1,\ldots, k_r)$ with $$\min\bs k \ge \Deg \bs B + m(4\Sigma_m+2).$$ 

    Then for $\bs A$ a random variable in $\Po,$ we have
        $$\Pro\big((\exists\bs D\in\Po_{\bs k}^X,\, \bs D|\bs A) \cap (\bs A_{\le m} = \bs B)\big) \le \frac{4\Pi_m^2 \tau(\bs B)}{\|\bs B\|} + R_\bs B,$$ with an error term $$R_\bs B := \sum_{\bs F|\bs B} \sum_{\substack{\bs E\in \Po_{\bs k-\deg \bs F}^X \\ \bs E_{\le m} = 1}} \sum_{\substack{\bs G=\bs I_1\ldots \bs I_\ell\\ \ell\le 4\Sigma_m + 2\\ \bs I_j\in\I_m \text{ distinct}}} \left|\Pro(\bs{BEG}|\bs A)-\frac1{\|\bs{BEG}\|} \right|.$$
\end{lemma}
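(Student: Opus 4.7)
The plan is to reduce the problem to a single application of Lemma~\ref{8.2} after decomposing each putative divisor $\bs D \in \Po_\bs k^X$. On the event $\bs A_{\le m} = \bs B$, any such $\bs D$ dividing $\bs A$ factors uniquely as $\bs D = \bs F \bs E$ with $\bs F := \bs D_{\le m}$ a divisor of $\bs B$ and $\bs E := \bs D_{>m} \in \Po_{\bs k - \deg \bs F}^X$ satisfying $\bs E_{\le m} = 1$. Because $\bs E$ is coprime to the $m$-friable $\bs B$, the conjunction $\{\bs D | \bs A\} \cap \{\bs A_{\le m} = \bs B\}$ coincides with $\{\bs B\bs E | \bs A\} \cap \{(\bs A/\bs B\bs E)_{\le m} = 1\}$, and a union bound over the pairs $(\bs F, \bs E)$ yields
\begin{equation*}
\Pro\bigl((\exists \bs D \in \Po_\bs k^X,\, \bs D | \bs A) \cap (\bs A_{\le m} = \bs B)\bigr) \le \sum_{\bs F | \bs B} \sum_{\substack{\bs E \in \Po_{\bs k - \deg \bs F}^X \\ \bs E_{\le m} = 1}} \Pro\bigl(\bs B\bs E | \bs A,\, (\bs A/\bs B\bs E)_{\le m} = 1\bigr).
\end{equation*}

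Applying Lemma~\ref{8.2} to each summand with divisor $\bs B\bs E$ bounds it by $\tfrac{2 \Pi_m}{\|\bs B\bs E\|}$ plus the sum of $|\Pro(\bs B\bs E\bs G|\bs A) - 1/\|\bs B\bs E\bs G\||$ over $\bs G = \bs I_1 \cdots \bs I_\ell$ with $\ell \le 4\Sigma_m + 2$ distinct irreducibles from $\I_m$. The triple sum of these errors over $(\bs F, \bs E, \bs G)$ matches $R_\bs B$ verbatim, so it remains to dominate the main term $\tfrac{2\Pi_m}{\|\bs B\|} \sum_{\bs F | \bs B} \sum_{\bs E} 1/\|\bs E\|$ by $\tfrac{4\Pi_m^2 \tau(\bs B)}{\|\bs B\|}$. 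Since the outer sum has $\tau(\bs B)$ terms, this reduces to the per-$\bs F$ inequality
\begin{equation*}
\sum_{\substack{\bs E \in \Po_\bs j^X \\ \bs E_{\le m} = 1}} \frac{1}{\|\bs E\|} \le 2\, \Pi_m
\end{equation*}
for every $\bs j = \bs k - \deg \bs F$, whose components all exceed $m(4\Sigma_m + 2)$ thanks to the hypothesis on $\bs k$.

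The inner sum factors over components as $\prod_i G_i[j_i]/p_i^{j_i}$, where $G_i[d]$ counts monic polynomials of degree $d$ in $\F_{p_i}[X]$ coprime to every irreducible of degree $\le m$. The Euler product identity gives the generating function $\sum_d G_i[d]\, z^d = N_i(z)/(1 - p_i z)$ with $N_i(z) := \prod_{I\text{ irred},\,\deg I \le m}(1 - z^{\deg I})$, so that extracting the coefficient of $z^d$ yields $G_i[d]/p_i^d = N_i(1/p_i) - T_d$ with $T_d := \sum_{\ell > d}[z^\ell]N_i(z)\cdot p_i^{-\ell}$, and $\prod_i N_i(1/p_i) = \Pi_m \cdot \prod_i(1 - 1/p_i) \le \Pi_m$.

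The main obstacle is controlling the tail $|T_d|$ by $o(N_i(1/p_i))$: the exact identity $G_i[d]/p_i^d = N_i(1/p_i)$ holds only for $d \ge \deg N_i$, which can be as large as $\sum_{k \le m} p_i^k \asymp p_i^m$, whereas our available $d = j_i$ is only of size $\asymp rm\log m$. I would instead apply a Rankin-style Markov bound $|T_d| \le q^{-d}\prod_I(1 + (q/p_i)^{\deg I})$ with parameter $q = 1 + c/m$ for a small constant $c > 0$; bounding the product via $1 + x \le e^x$ together with Lemma~\ref{pi_m} gives $|T_d| \le m^{-\Omega(r)}$ for $d \ge m(4\Sigma_m + 2)$, which combined with the uniform lower bound $N_i(1/p_i) \ge m^{-O(1)}$ also coming from Lemma~\ref{pi_m} yields $G_i[j_i]/p_i^{j_i} \le N_i(1/p_i)(1 + o(1))$. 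Taking the product over the $r$ components then gives $\le 2\Pi_m$ for $m$ sufficiently large, with the finitely many remaining small values of $m$ (where $j_i \ge \deg N_i$ and the exact identity applies) handled directly.
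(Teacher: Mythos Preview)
Your decomposition $\bs D = \bs F\bs E$ and first application of Lemma~\ref{8.2} match the paper exactly; the divergence is only in how you bound the inner sum $\sum_{\bs E \in \Po_{\bs j}^X,\, \bs E_{\le m}=1} 1/\|\bs E\| \le 2\Pi_m$. The paper observes that this sum (after dropping the harmless $^X$ restriction) is precisely $\Pro_0(\bs E_{\le m}=1)$ for $\Pro_0$ the \emph{uniform} law on $\Po_{\bs j}$, and applies Lemma~\ref{8.2} a second time with $\bs D=1$. Under $\Pro_0$ one has $\Pro_0(\bs G\mid\bs E)=1/\|\bs G\|$ exactly whenever $\max\deg\bs G \le \min\bs j$, and the hypothesis $\min\bs k \ge \Deg\bs B + m(4\Sigma_m+2)$ guarantees this for every $\bs G$ appearing in the error sum of Lemma~\ref{8.2}; hence that error vanishes identically and the bound $2\Pi_m$ drops out in one line. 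Your generating-function route recovers the same leading term $\prod_i N_i(1/p_i) \le \Pi_m$ but must then control the tail $T_d$ by hand.

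Your small-$m$ patch has a gap: the parenthetical claim that $j_i \ge \deg N_i$ for the finitely many remaining $m$ is false. Indeed $\deg N_i = \sum_{k\le m} k\cdot\#\{I:\deg I = k\}$ is of order $p_i^m$, whereas the hypothesis only forces $j_i \ge m(4\Sigma_m+2)$, which grows like $rm\log m$. For instance with $r=12$, $p_i=37$, $m=2$ one gets $\deg N_i = 37 + 2\cdot 666 = 1369$, while $m(4\Sigma_m+2)$ is only about $130$. The Rankin bound can in fact be made uniform in $m\ge 1$ (the decay $q^{-d}$ with $d/m \ge 4\Sigma_m + 2$ already dominates the $m^{O(1)}$ growth of both the product and of $1/N_i(1/p_i)$), so the case split is unnecessary --- but as written the argument does not close, and the paper's reuse of Lemma~\ref{8.2} sidesteps the whole issue.
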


\begin{proof}

    We use the fact that when $\bs B$ has few factors, there are few choices of $\bs D_{\le m}$ for $\bs D$ dividing $\bs A,$ as $\bs D_{\le m}$ must then divide $\bs B = \bs A_{\le m}.$ We thus write    
    \begin{align}
        \Pro\big((\exists\bs D\in \Po_{\bs k}^X,\, \bs D|\bs A) \cap (\bs A_{\le m} = \bs B)\big)  
        &\le \sum_{\bs D\in \Po_{\bs k}^X} \Pro\big(\bs D|\bs A \cap (\bs A_{\le m} = \bs B)\big) \notag \\
        &= \sum_{\bs F|\bs B} \sum_{\substack{\bs E \in \Po_{\bs k-\deg \bs F}^X \\ \bs E_{\le m} = 1}} \Pro\big(\bs E|\bs A \cap (\bs A_{\le m} = \bs B)\big) \notag \\
        &\le \sum_{\bs F|\bs B} \sum_{\substack{\bs E\in\Po_{\bs k-\deg \bs F}\\ \bs E_{\le m} = 1}} \frac{2\Pi_m}{\|\bs B\bs E\|} + R_\bs B, \label{step9}
    \end{align}
    where we introduced the change of variable $\bs F = \bs D_{\le m}; \bs E = \bs D_{\ge m}$ and then used Lemma \ref{8.2} to bound $\Pro\big(\bs E|\bs A \cap (\bs A_{\le m} = \bs B)\big) = \Pro\big(\bs B\bs E|\bs A \cap [(\bs A/\bs{BE})_{\le m} = 1]\big)$ which gives the error term $R_\bs B.$ 

    To bound the main term, note that for any $\bs F$ dividing $\bs B,$ if $\Pro_0$ uniform on $\Po_{\bs k-\deg \bs F},$ Lemma \ref{8.2} gives us 
    \begin{equation}\label{step10}
        \sum_{\substack{\bs E \in\Po_{\bs k-\deg \bs F}\\ \bs E_{\le m} = 1}} \frac{1}{\|\bs E\|} = \Pro_0(\bs E_{\le m} = 1) \le 2\Pi_m + \sum_{\substack{\bs G=\bs I_1\ldots \bs I_\ell\\ \ell\le 4\Sigma_m + 2\\ \bs I_j\in\I_m \text{ distinct}}} \left|\Pro_0(\bs G|\bs E)-\frac1{\|\bs G\|} \right|.
    \end{equation}
    
    However, when $\bs G = \bs I_1\ldots \bs I_\ell$ is a product of $\ell\le 4\Sigma_m +2$ distinct factors of $\I_m$, we have $$\max\deg \bs G\le m\ell \le \min\bs k - \Deg \bs B \le \min(\bs k-\deg \bs F)$$ (as we assumed $\min\bs k \ge \Deg \bs B + m(4\Sigma_m+2)$), so that $\Pro_0(\bs G|\bs E) = 1/\|\bs G\|$ for all these $G$ and (\ref{step10}) becomes $$\sum_{\substack{\bs E \in\Po_{\bs k-\deg \bs F}\\ \bs E_{\le m} = 1}} \frac{1}{\|\bs E\|} = \Pro_0(\bs E_{\le m} = 1) \le 2\Pi_m .$$

    Coming back to (\ref{step9}), we then have 
    \begin{align*}
        \Pro\big((\exists\bs D\in \Po_{\bs k}^X,\, \bs D|\bs A) \cap (\bs A_{\le m} = \bs B)\big) 
        &\le \sum_{\bs F|\bs B} \sum_{\substack{\deg \bs E =\bs k-\deg \bs F\\ \bs E_{\le m} = 1}} \frac{2\Pi_m}{\|\bs B\bs E\|} + R_\bs B
        \\&\le \sum_{\bs F|\bs B} \frac{4\Pi_m^2}{\|\bs B\|} + R_\bs B \\&= \frac{4\Pi_m^2 \tau(\bs B)}{\|\bs B\|} + R_\bs B.
    \end{align*}

\end{proof}

We can now move on to the proof of Proposition \ref{degree-k-prop}.

\begin{proof}[Proof of Proposition \ref{degree-k-prop}]

    Our assumptions tell us for all $i, \Pro\left(\bs X_i^{\lceil L\Sigma_m\rceil}|\bs A\right)\le (1-1/L)^{L\Sigma_m} \le \e^{-\Sigma_m}$ so that $$\Pro\left(\exists i, \bs X_i^{\lceil L\Sigma_m\rceil}|\bs A\right) \le r\e^{-\Sigma_m}.$$

    We now consider the case where $\bs X_i^{\lceil L\Sigma_m\rceil}$ does not divide $\bs A$ for any $i.$ Then, if $\bs D$ is a factor of $\bs A$ of~degree~$(k,\ldots, k)$ for some $k\in [v, 2v],$ defining $\bs D_X := \prod_i \bs X_i^{\nu_{\bs X_i}(\bs D)}$ and $\ell = \deg \bs D_X,$ we get $\max \ell < L\Sigma_m$ (as $\bs D_X$ divides $\bs A$) and $\bs D/\bs D_X$ is a factor of $\bs A$ of degree $\bs k'= (k-\ell_1,\ldots, k-\ell_r)$ without any $\bs X_i$ factor. Let $$K = \big\{\bs k =(k_1,\ldots,k_r), \min\bs k> v-L\Sigma_m, \max\bs k \le 2v, \max\bs k -\min\bs k< L\Sigma_m\big\}$$ the set of such possible $\bs k',$ of cardinality $\le (v+L\Sigma_m)(L\Sigma_m)^r.$
    
    When $m>L, \bs k\in K$ and $\bs B$ is a $m$-friable element of $\Ev_m,$ we always have $\Deg \bs B = \Deg \bs B_{\le m} \le m(\Sigma_m - 2),$ so that $$\min\bs k \ge v - L\Sigma_m \ge 5m\Sigma_m \ge \Deg \bs B + m(4\Sigma_m+2)$$ and Lemma \ref{no-Xi} applies. We thus have
    \begin{align}
        \Pro&\left(\left(\exists \bs D\in \bigcup_{v\le k\le 2v} \Po_{(k,\ldots, k)},\, \bs D|\bs A\right) \cap \Ev_m\right) \notag
        \\&\le \sum_{\bs k\in K}\sum_{\substack{\bs B = \bs B_{\le m}\\ \bs B\in\Ev_m}} \Pro\big((\exists\bs D\in \Po_{\bs k}^X,\, \bs D|\bs A) \cap (\bs A_{\le m} = \bs B)\big) + \Pro\left(\exists i, \bs X_i^{\lceil L\Sigma_m\rceil}|\bs A\right) \notag \\
        &\le \#K\cdot \sum_{\substack{\bs B = \bs B_{\le m}\\ \tau(\bs B)\le \e^{(1-1/r)\Sigma_m}}} \frac{4\Pi_m^2 \tau(\bs B)}{\|\bs B\|} + R + r\e^{-\Sigma_m}, \label{step11}
    \end{align}
    with an error term coming from Lemma \ref{no-Xi} $$R := \sum_{\substack{\bs B = \bs B_{\le m}\\ \bs B\in\Ev_m}} R_\bs B = \sum_{\substack{\bs B = \bs B_{\le m}\\ \bs B\in\Ev_m}} \sum_{\bs F|\bs B} \sum_{\bs k\in K} \sum_{\substack{\bs E \in \Po_{\bs k-\deg \bs F}^X \\ \bs E_{\le m} = 1}} \sum_{\substack{\bs G=\bs I_1\ldots \bs I_\ell\\ \ell\le 4\Sigma_m + 2\\ \bs I_j\in\I_m \text{ distinct}}} \left|\Pro(\bs{BEG}|\bs A)-\frac1{\|\bs{BEG}\|} \right|.$$

    To bound the main term, we use Rankin's trick. We have for all $t>0,$
    \begin{align*}
        \sum_{\substack{\bs B = \bs B_{\le m}\\ \tau(\bs B)\le \e^{(1-1/r)\Sigma_m}}} \frac{\tau(\bs B)}{\|\bs B\|} &\le 
        \sum_{\bs B = \bs B_{\le m}} \frac{\tau(\bs B)^{1-t}\e^{(1-1/r)\Sigma_m t}}{\|\bs B\|} \\&= 
        \e^{(1-1/r)\Sigma_m t} \prod_{\bs I\in \I_m} \sum_{\nu\ge 0} \frac{(\nu+1)^{1-t}}{\|\bs I\|^\nu} \\&\le
        \exp\left((1-1/r)\Sigma_m t + \sum_{\bs I\in\I_m}\sum_{\nu\ge 1} \frac{(\nu+1)^{1-t}}{\|\bs I\|^\nu}\right) \\&\le
        \e^{(1-1/r)\Sigma_m t + 2^{1-t}\Sigma_m + S_{1-t}} = O_{r,t}\left(\e^{\left((1-1/r)t + 2^{1-t}\right)\Sigma_m}\right)
    \end{align*} (where $S_{1-t}$ comes from Lemma \ref{squarefree} and depends only on $r$ and $t.$)
    
    Picking the optimal $t := 1-\left(\log\frac{1-1/r}{\log 2}\right)/\log 2,$ we get (as $t$ depends only on $r$) \begin{equation}\label{step12}
        \sum_{\substack{\bs B = \bs B_{\le m}\\ \tau(\bs B)\le \e^{(1-1/r)\Sigma_m}}} \frac{\tau(\bs B)}{\|\bs B\|} =  O_r\left(\e^{\Sigma_m\left[2-2Q\left(\frac{1-1/r}{2\log 2}\right)\right]}\right).
    \end{equation} 

    To bound the error term $R,$ we regroup based on the value of $\bs C = \bs{BEG},$ noting that for each value of $\bs C$ there is at most one choice of $\bs E$ (namely, $\bs E = \bs C_{>m}$) and $\tau(\bs C_{\le m})$ choices for the pair $(\bs B,\bs G)$ that yield~$\bs{BEG} = \bs C.$ Also, given we excluded the $\bs X_i$ factors from the $m$-friable part in Definition \ref{m-friable}, we always have $\bs C\in \Po^X$ when $\bs E = \bs C_{>m}\in \Po^X.$
    \begin{align*}
        R &= \sum_{\substack{\bs B = \bs B_{\le m}\\ \bs B\in\Ev_m}} \sum_{\bs F|\bs B} \sum_{\bs k\in K} \sum_{\substack{\bs E \in \Po_{\bs k-\deg \bs F}^X \\ \bs E_{\le m} = 1}} \sum_{\substack{\bs G= \bs I_1\ldots \bs I_\ell\\ \ell\le 4\Sigma_m + 2\\ \bs I_j\in\I_m \text{ distinct}}} \left|\Pro(\bs{BEG}|\bs A)-\frac1{\|\bs{BEG}\|} \right|
        \\&\le \sum_{\substack{\bs B = \bs B_{\le m}\\ \Deg \bs B\le m(\Sigma_m -2) \\ \tau(\bs B) \le \e^{(1-1/r)\Sigma_m}}} \tau(\bs B) \sum_{\substack{\bs E\in\Po^X\\ \max\deg \bs E \le 2v\\ \bs E_{\le m} = 1}} \sum_{\substack{\bs G = \bs G_{\le m}\\ \Deg \bs G\le m(4\Sigma_m + 2)\\ \tau(\bs G)\le 2^{4\Sigma_m + 2}}} \left|\Pro(\bs{BEG}|\bs A)-\frac1{\|\bs{BEG}\|} \right|
        \\&\le \sum_{\substack{\bs C\in\Po^X \\ \Deg \bs C_{\le m}\le 5m\Sigma_m \\ \Deg \bs C_{>m}\le 2v \\ \tau(\bs C_{\le m})\le 4\e^{(1-1/r+4\log 2)\Sigma_m}}} \tau(\bs C_{\le m})\cdot \e^{(1-1/r)\Sigma_m} \left|\Pro(\bs C|\bs A)-\frac1{\|\bs C\|} \right|
        \\&\le 4\e^{(2-2/r+4\log 2)\Sigma_m} \Delta_\bs A(2v + 5m\Sigma_m).
    \end{align*}

    Combining this with (\ref{step12}) into (\ref{step11}) and using that $\Pi_m\le \e^{-\Sigma_m},$ $\Sigma_m = r(\log m + O(1))$ from Lemma~\ref{pi_m} and $2Q\left(\frac{1-1/r}{2\log 2}\right) = \frac 1r + Q\left(\frac{1-1/r}{\log 2}\right)$, we get 
    \begin{align*}
        \Pro&\left(\left(\exists \bs D\in \bigcup_{v\le k\le 2v} \Po_{(k,\ldots, k)},\, \bs D|\bs A\right) \cap \Ev_m\right) \\
        &\le (v+L\Sigma_m)(L\Sigma_m)^r \cdot 4\Pi_m^2\cdot O_r\left(\e^{\Sigma_m\left[2-2Q\left(\frac{1-1/r}{2\log 2}\right)\right]}\right) + 4\e^{(2-2/r+4\log 2)\Sigma_m} \Delta_\bs A(2v + 5m\Sigma_m) + r\e^{-\Sigma_m}
        \\&\le O_r\left(v(L\Sigma_m)^r m^{-2rQ\left(\frac{1-1/r}{2\log 2}\right)} + m^{5r}\Delta_\bs A(2v + 5m\Sigma_m) + m^{-r}\right) \\&=  O_r\left(\frac vm(L\log m)^r m^{-rQ\left(\frac{1-1/r}{\log 2}\right)} + m^{5r}\Delta_\bs A(2v + 5m\Sigma_m)\right).
    \end{align*}

    When $m\le L,$ we have $\Pro\left(\left(\exists \bs D\in \bigcup_{n_1\le k\le n_2} \Po_{(k,\ldots, k)},\, \bs D|\bs A\right) \cap \Ev_m\right)\le 1\le L^rm^{-r}$ so the conclusion still holds.
\end{proof}

\subsection{Ruling out large $\bs A_{\le m}$}
\label{many-factors}

We now want to bound the probability of $\overline{\Ev_m}.$ We start by bounding the probability of the condition $\Deg \bs A_{\le m} \ge m(\Sigma_m -2).$
\begin{lemma}\label{deg-le-m}
    Let $C>0$ a constant. For all $u,$ we get $$\Pro\left(\Deg \bs A_{\le m} \ge um\right) \le O_{r,C}(m^{2r}\e^{-Cu}) + \Delta_\bs A\big((u+1)m\big).$$
\end{lemma}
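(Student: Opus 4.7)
The plan is to reduce the event $\{\Deg \bs A_{\le m} \ge um\}$ to a union of divisibility events, split each probability into a deterministic uniform prediction plus an error absorbed by $\Delta_{\bs A}$, and estimate the main term by Rankin's trick.

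First I would note that if $\Deg \bs A_{\le m} \ge um$, then dividing $\bs A_{\le m}$ successively by any of its irreducible factors (each of degree at most $m$, since they lie in $\I_m$) decreases $\Deg$ by at most $m$ per step. Stopping the first time the total degree falls below $(u+1)m$ produces an $m$-friable divisor $\bs D = \bs D_{\le m}$ of $\bs A$ with $\Deg \bs D \in [um, (u+1)m)$. Since $\I_m$ excludes the $\bs X_i$, this $\bs D$ lies in $\Po^X$ and satisfies $\max\deg \bs D < (u+1)m$. A union bound then gives
$$\Pro(\Deg \bs A_{\le m} \ge um) \le \sum_{\substack{\bs D = \bs D_{\le m}\\ um \le \Deg \bs D < (u+1)m}} \Pro(\bs D \mid \bs A).$$
Writing $\Pro(\bs D \mid \bs A) = 1/\|\bs D\| + (\Pro(\bs D \mid \bs A) - 1/\|\bs D\|)$ and summing, the absolute-error contribution is $\le \Delta_{\bs A}((u+1)m)$, directly from Definition~\ref{def-delta-P}.

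It remains to bound the deterministic main term $S := \sum_{\bs D = \bs D_{\le m},\ \Deg \bs D \ge um} 1/\|\bs D\|$ by $O_{r,C}(m^{2r}\e^{-Cu})$. I would apply Rankin's trick with $x := \e^{C/m}$, giving
$$S \le x^{-um}\sum_{\bs D = \bs D_{\le m}}\frac{x^{\Deg \bs D}}{\|\bs D\|} = \e^{-Cu}\prod_{\bs I \in \I_m}\frac{1}{1 - x^{\deg \bs I}/\|\bs I\|}.$$
To control the Euler product I would expand $-\log(1-y) = \sum_{k\ge 1} y^k/k$ with $y_{\bs I} := x^{\deg \bs I}/\|\bs I\| = (x/p_i)^{\deg \bs I}$ when $\bs I$ lies in the $i$th component. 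By the Prime Polynomial Theorem and $x^k = 1 + O_C(k/m)$ for $k \le m$, the $k=1$ contribution is $\sum_{\bs I \in \I_m} y_{\bs I} \le r\sum_{k=1}^m x^k/k \le r\log m + O_{r,C}(1)$. The tails $k \ge 2$ contribute only $O_{r,C}(1)$: for $m$ large in terms of $C$, $x^2/p_i \le \e^{2C/m}/2$ stays bounded away from $1$, so each inner geometric series $\sum_j (x^k/p_i^{k-1})^j/j$ is $O(1)$. Altogether $\prod \le O_{r,C}(m^{r}) \le O_{r,C}(m^{2r})$.

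The main obstacle is calibrating $x$: one needs $x^m \ge \e^C$ to extract the decay $x^{-um} \le \e^{-Cu}$, while keeping the Euler product polynomial in $m$ rather than exponential. The choice $x = \e^{C/m}$ is the sweet spot because then $y_{\bs I} \le (\e^{C/m}/2)^{\deg \bs I}$ converges to $(1/2)^{\deg \bs I}$ as $m \to \infty$, keeping $\sum_{\bs I} -\log(1 - y_{\bs I})$ of size $r\log m$. Residual small-$m$ regimes (where $y_{\bs I}$ could approach $1$) and very large $u$ (where $um$ exceeds the total degree of $\bs A$ so the event is empty) are handled by absorbing into the implicit constants using $\Pro \le 1$.
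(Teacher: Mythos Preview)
Your proof is correct and follows essentially the same approach as the paper: reduce to the existence of an $m$-friable divisor $\bs D$ with $\Deg\bs D\in[um,(u+1)m)$, split $\Pro(\bs D\mid\bs A)$ into $1/\|\bs D\|$ plus an error absorbed by $\Delta_{\bs A}((u+1)m)$, and bound the main term via Rankin's trick with parameter $\e^{C/m}$. The only cosmetic difference is that the paper controls the Euler product by the crude inequality $-\log(1-y)\le 2y$ (valid on $[0,\e^{1/3}/2]$ once $m\ge 3C$), which directly yields the factor $m^{2r}$, whereas you expand $-\log(1-y)$ fully and argue the higher-order terms are $O_{r,C}(1)$, giving $m^r$ before relaxing to $m^{2r}$.
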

Notably with $C=3$ and $u = \Sigma_m -2,$ we get $$\Pro\left(\Deg \bs A_{\le m} \ge m(\Sigma_m -2)\right) \le O_r(m^{2r}\e^{-3\Sigma_m}) + \Delta_\bs A\big(m(\Sigma_m-1)\big) \le  O_r(m^{-r}) + \Delta_\bs A(m\Sigma_m)$$ (as $\Sigma_m = r(\log m + O(1))$ by Lemma \ref{pi_m} and $\Delta_\bs A$ is increasing).

The lemma above is a generalisation of \cite[lemma 9.1]{koukoulo} working with $r$ primes instead of one. The proof, given below, follows the same steps as the reference above.
\begin{proof}
    If $\Deg \bs A_{\le m} \ge um,$ then consider $\bs D$ a factor of $\bs A_{\le m}$ of total degree $\ge um,$ but of minimal total degree among these. As all irreducible factors of $\bs A_{\le m}$ have total degree $\le m,$ this is also true for $\bs D.$ Thus, if $\bs D$ had total degree $\ge (u+1)m,$ we could remove from it an irreducible factor and keep a factor of degree~$\ge um,$ contradicting the minimality of $\bs D.$ Thus, when $\Deg \bs A_{\le m} \ge um,$ $\bs A$ always has a $m$-friable factor $\bs D$ of total degree between $um$ and $(u+1)m.$

    We can then bound \begin{align*}
        \Pro\left(\Deg \bs A_{\le m} \ge um\right) &\le \sum_{\substack{um\le \Deg \bs D\le (u+1)m\\ \bs D=\bs D_{\le m}}} \Pro(\bs D|\bs A)
        \\&\le \sum_{\substack{um\le \Deg \bs D\\ \bs D= \bs D_{\le m}}} \frac1{\|\bs D\|} + \sum_{\substack{\Deg \bs D\le (u+1)m\\ \bs D=\bs D_{\le m}}} \left|\Pro(\bs D|\bs A)-\frac1{\|\bs D\|}\right|
        \\ &\le \sum_{\substack{um\le \Deg \bs D\\ \bs D= \bs D_{\le m}}} \frac1{\|\bs D\|} + \Delta_\bs A\big((u+1)m\big)
    \end{align*}
    (note here the $\bs D=\bs D_{\le m}$ condition implies the $\bs X_i$ do not divide $\bs D$ as we excluded these factors from the $m$-friable part in Definition \ref{m-friable}, which allows to bound the error term by a $\Delta_\bs A$ term.)
    
    We then use Rankin's trick to bound the main term. Adjusting the implicit constant of the big-Oh term in the final result, we can suppose $m\ge 3C$ and get
    \begin{align*}
        \sum_{\substack{um\le \Deg \bs D\\ \bs D=\bs D_{\le m}}} \frac1{\|\bs D\|}
        &\le \sum_{\bs D=\bs D_{\le m}} \frac{\e^{C\Deg \bs D/m-Cu}}{\|\bs D\|}
        \\ &= \e^{-Cu}\prod_{\bs I\in \I_m}\left(1-\frac{\e^{C\Deg \bs I/m}}{\|\bs I\|} \right)^{-1},
    \end{align*}
    where we indeed have $\frac{\e^{C\Deg \bs I/m}}{\|\bs I\|} \le (\e^{C/m}/2)^{\Deg \bs I} \le (\e^{1/3}/2)^{\Deg \bs I} <1$ for all $\bs I \in \I_m$ which justifies the convergence of the series and the last equality.

     Moreover, for all $x \in [0,\e^{1/3}/2],$ we have $-\log (1-x)\le 2x$ so that 
     \begin{align*}
         \log \prod_{\bs I\in \I_m}\left(1-\frac{\e^{C\Deg \bs I/m}}{\|\bs I\|} \right)^{-1} &\le 2\sum_{\bs I\in \I_m} \frac{\e^{C\Deg \bs I/m}}{\|\bs I\|} \\&\le 2\sum_{\bs I\in \I_m} \frac{1+ (\e^C-1)\Deg \bs I/m}{\|\bs I\|} \\&= 2\sum_{i=1}^r \sum_{j=1}^m \frac{\#\{I \in \F_{p_i}[X] \text{ irreducible of degree }j, I\ne X\}}{p_i^j}(1+ (\e^C-1)j/m) \\&\le 2r\sum_{j=1}^m \frac{1+(\e^C-1)j/m}j \\&\le 2r(\log m +\e^C), 
     \end{align*}
     so that $$\sum_{\substack{um\le \Deg \bs D\le (u+1)m\\ \bs D=\bs D_{\le m}}} \frac1{\|\bs D\|} \le \e^{-Cu}\prod_{\bs I\in \I_m}\left(1-\frac{\e^{C\Deg \bs I/m}}{\|\bs I\|} \right)^{-1} \le \e^{2r\e^C}\e^{-Cu}m^{2r}$$ as we want.
\end{proof}

We now show that the number of distinct irreducible factors of $\bs A_{\le m}$ is unlikely to be very large. This will be useful to bound the error term when trying to estimate the distribution of $\tau(\bs A_{\le m}).$

\begin{lemma}\label{omega}
     We have when $u>1,$ $$\Pro\big(\omega(\bs A_{\le m}) \ge u\Sigma_m\big) \le O_{u,r}\left(\e^{(1-Q(u))\Sigma_m}\right) + \Delta_\bs A(mu\Sigma_m),$$ where $\omega(\bs B)$ is the number of distinct irreducible factors of $\bs B$ for $\bs B\in\Po.$
\end{lemma}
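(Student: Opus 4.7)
The plan is to mirror the strategy used in Lemma~\ref{deg-le-m}: deduce from $\omega(\bs A_{\le m})\ge u\Sigma_m$ the existence of a $m$-friable factor of $\bs A$ of a precisely controlled shape, bound the resulting union by a sum over such factors, and split this sum into a ``main'' piece controlled by $1/\|\bs D\|$ and an ``error'' piece absorbed into $\Delta_{\bs A}$.

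First I would observe that since $\omega$ takes integer values, the event $\{\omega(\bs A_{\le m})\ge u\Sigma_m\}$ is contained in the event that $\bs A_{\le m}$ has at least $k:=\lfloor u\Sigma_m\rfloor$ distinct irreducible factors, and hence admits a squarefree $m$-friable divisor $\bs D=\bs I_1\cdots\bs I_k$ with $\bs I_j\in\I_m$ pairwise distinct. Such a $\bs D$ lies in $\Po^X$ (because $m$-friable factors exclude the $\bs X_i$ by Definition \ref{m-friable}) and satisfies $\max\deg\bs D\le mk\le m u\Sigma_m$. Thus
\[
\Pro\!\big(\omega(\bs A_{\le m})\ge u\Sigma_m\big)\le \sum_{\substack{\bs D=\bs I_1\cdots\bs I_k\\ \bs I_j\in\I_m\text{ distinct}}}\Pro(\bs D|\bs A)
\le \sum_{\bs D}\frac1{\|\bs D\|}+\sum_{\bs D}\left|\Pro(\bs D|\bs A)-\frac1{\|\bs D\|}\right|.
\]
The second sum is bounded by $\Delta_{\bs A}(mu\Sigma_m)$ directly from Definition \ref{def-delta-P}, since every $\bs D$ above belongs to $\Po^X$ with $\max\deg\bs D\le mu\Sigma_m$.

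For the main term I would use the standard ``counting ordered tuples with $1/k!$'' trick, together with the definition of $\Sigma_m$:
\[
\sum_{\substack{\bs D=\bs I_1\cdots\bs I_k\\ \bs I_j\in\I_m\text{ distinct}}}\frac1{\|\bs D\|}
=\frac1{k!}\sum_{\substack{(\bs I_1,\ldots,\bs I_k)\in\I_m^k\\ \text{pairwise distinct}}}\prod_{j}\frac1{\|\bs I_j\|}
\le\frac{\Sigma_m^k}{k!}.
\]
Then I would apply the crude Stirling bound $k!\ge(k/e)^k$ to get $\Sigma_m^k/k!\le(e\Sigma_m/k)^k$, and plug in $k=\lfloor u\Sigma_m\rfloor=u\Sigma_m-\eta$ with $\eta\in[0,1)$. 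A short computation expanding $k(1+\log\Sigma_m-\log k)$ in $\eta/\Sigma_m$ gives
\[
(e\Sigma_m/k)^k=\exp\!\big(u\Sigma_m(1-\log u)+O_u(1)\big)=O_u\!\big(\e^{(1-Q(u))\Sigma_m}\big),
\]
using the identity $u-u\log u=1-Q(u)$. This delivers the main term of the desired estimate (and $O_u\subset O_{u,r}$).

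The main obstacle is simply being careful with the integer-part adjustment: with $k=\lfloor u\Sigma_m\rfloor$ the bound $k\le u\Sigma_m$ is what makes the error term fit inside $\Delta_{\bs A}(mu\Sigma_m)$ as stated, but since $k$ is then slightly less than $u\Sigma_m$, one must check the Stirling expansion carefully to confirm that the sub-optimal $k$ costs only a $u$-dependent multiplicative constant and does not spoil the exponent $1-Q(u)$. Assuming (as permitted by adjusting the implicit $O_{u,r}$ constant) that $m$, and hence $\Sigma_m$, is large enough in terms of $u$, all other steps are routine.
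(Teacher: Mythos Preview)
Your proposal is correct. The overall strategy matches the paper's exactly: reduce to the existence of a squarefree $m$-friable divisor with $k=\lfloor u\Sigma_m\rfloor$ factors in $\I_m$, split the resulting sum into a main term and an error term, and absorb the error into $\Delta_{\bs A}(mu\Sigma_m)$.

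The only real difference is how the main term is handled. The paper relaxes to a sum over \emph{all} $m$-friable $\bs B$ with $\omega(\bs B)\ge u\Sigma_m-1$ and applies Rankin's trick: insert a weight $\e^{t(\omega(\bs B)-u\Sigma_m+1)}$, factorize as a product over $\I_m$, invoke Lemma~\ref{squarefree} to control the $\nu\ge 2$ contributions, and optimize at $t=\log u$. You instead keep the sum restricted to squarefree $\bs D$ with exactly $k$ factors, bound it directly by $\Sigma_m^k/k!$, and use Stirling. Your route is a little more elementary---it sidesteps the Euler product and Lemma~\ref{squarefree} entirely---while the paper's Rankin approach has the virtue of matching the template used in the surrounding arguments (Lemma~\ref{deg-le-m}, the proof of Proposition~\ref{degree-k-prop}, and Proposition~\ref{lemma-Ev}). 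Both arrive at the same exponent $1-Q(u)$ via the same optimization $t=\log u$ (your $(e\Sigma_m/k)^k$ computation is just the combinatorial shadow of that choice).
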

\begin{proof}
    As in the proof of Lemma \ref{deg-le-m}, we start by noting that if $\omega(\bs A_{\le m}) \ge u\Sigma_m,$ then $\bs A_{\le m}$ has a factor $\bs B$ which is a product of $\lfloor u\Sigma_m\rfloor$ distinct irreducible factors. This factor then has a total degree $\Deg \bs B\le mu\Sigma_m,$ and verifies $\omega(\bs B) = \lfloor u\Sigma_m\rfloor \ge u\Sigma_m-1,$ so that
    \begin{align*}
        \Pro\big(\omega(\bs A_{\le m}) \ge u\Sigma_m\big) &\le \sum_{\substack{\bs B=\bs B_{\le m}\\ \omega(\bs B)\ge u\Sigma_m-1\\ \Deg \bs B\le mu\Sigma_m}} \Pro(\bs B|\bs A) 
        \\&\le \sum_{\substack{\bs B= \bs B_{\le m}\\ \omega(\bs B)\ge u\Sigma_m-1}} \frac1{\|\bs B\|} + \sum_{\substack{\bs B= \bs B_{\le m}\\ \Deg \bs B\le mu\Sigma_m}} \left|\Pro(\bs B|\bs A)-\frac1{\|\bs B\|}\right|
        \\&\le \sum_{\substack{\bs B= \bs B_{\le m}\\ \omega(\bs B)\ge u\Sigma_m-1}} \frac1{\|\bs B\|} + \Delta_\bs A(mu\Sigma_m).
    \end{align*}
    
    To bound the main term, we use Rankin's trick. For $t>0,$ we have
    \begin{align*}
        \sum_{\substack{\bs B= \bs B_{\le m}\\ \omega(\bs B)\ge u\Sigma_m-1}} \frac1{\|\bs B\|} &\le \sum_{\bs B= \bs B_{\le m}} \frac{\e^{t(\omega(\bs B) - u\Sigma_m+1)}}{\|\bs B\|} \\&= \e^{-tu\Sigma_m+t}\prod_{\bs I\in\I_m}\left(1+ \sum_{\nu\ge 1}\frac{\e^t}{\|\bs I\|^\nu} \right) \\&\le \e^{-tu\Sigma_m + \e^t\Sigma_m + S_t+t}
    \end{align*}
    (where $S_t$ comes from Lemma \ref{squarefree} and we bounded $\e^t\le (\nu+1)^t$ for $\nu\ge 2.$)

    Taking the optimal $t := \log u >0$ (when $u>1$), we get the desired bound $$\e^{(1-Q(u))\Sigma_m + S_{\log u}+\log u} = O_{r,u}\left(\e^{(1-Q(u))\Sigma_m}\right).$$
\end{proof}

We now have everything we need to bound $\Pro\left(\overline{\Ev_m}\right).$
\begin{prop}\label{lemma-Ev}
    We have when $r\ge 4, m>0$ and $\bs A$ is a random variable in $\Po,$ $$\Pro\left(\overline{\Ev_m}\right) = O_r\left(m^{-rQ\left(\frac{1-1/r}{\log 2}\right)} + m^{5r}\Delta_\bs A(5m\Sigma_m)\right).$$
\end{prop}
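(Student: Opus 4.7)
My plan is to decompose $\overline{\Ev_m}\subseteq\{\Deg\bs A_{\le m}>m(\Sigma_m-2)\}\cup\{\tau(\bs A_{\le m})>\e^{(1-1/r)\Sigma_m}\}$ and handle each event separately. The first is directly controlled by Lemma~\ref{deg-le-m} with $u=\Sigma_m-2$ and $C=3$, yielding $O_r(m^{-r})+\Delta_\bs A(m\Sigma_m)$; since $u_0:=(1-1/r)/\log 2\in(1,\e)$ for $r\ge 4$ and thus $Q(u_0)\in(0,1)$, this is absorbed into the target $O_r(m^{-rQ(u_0)}+m^{5r}\Delta_\bs A(5m\Sigma_m))$.

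For the second event, set $T:=\e^{(1-1/r)\Sigma_m}$. The condition $r\ge 4>1/(1-\log 2)$ is exactly what makes $u_0>1$, hence $t:=\log_2 u_0>0$, and I apply Rankin's inequality followed by Lemma~\ref{8.2}:
\begin{equation*}
\Pro(\tau(\bs A_{\le m})>T)\le T^{-t}\sum_{\bs B=\bs B_{\le m}}\tau(\bs B)^t\left(\frac{2\Pi_m}{\|\bs B\|}+\sum_{\bs G}\left|\Pro(\bs{BG}|\bs A)-\frac{1}{\|\bs{BG}\|}\right|\right).
\end{equation*}
The main term equals $2\Pi_m T^{-t}\prod_{\bs I\in\I_m}\sum_{\nu\ge 0}(\nu+1)^t/\|\bs I\|^\nu$; each local factor is $1+2^t/\|\bs I\|+O_t(\|\bs I\|^{-2})$, so by Lemma~\ref{squarefree} the product is at most $\e^{2^t\Sigma_m+O_t(1)}$. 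Using $\Pi_m\le\e^{-\Sigma_m}$ and the identity $t(1-1/r)=u_0\log u_0$ (since $u_0\log 2=1-1/r$), the main term becomes $O_r(\e^{(2^t-1-t(1-1/r))\Sigma_m})=O_r(\e^{-Q(u_0)\Sigma_m})$, and Lemma~\ref{pi_m} converts this to $O_r(m^{-rQ(u_0)})$; the choice $t=\log_2 u_0$ is precisely the optimum of the exponent.

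The error term is $T^{-t}\sum_\bs B\tau(\bs B)^t\sum_\bs G|\Pro(\bs{BG}|\bs A)-1/\|\bs{BG}\||$. Regrouping by $\bs C=\bs{BG}$ (which is $m$-friable with $\Deg\bs C\le 5m\Sigma_m$ and admits at most $2^{\omega(\bs C)}\le\tau(\bs C)$ decompositions, each with $\tau(\bs B)\le\tau(\bs C)$), this is at most $T^{-t}\sum_\bs C\tau(\bs C)^{t+1}|\Pro(\bs C|\bs A)-1/\|\bs C\||$. The main obstacle is controlling the unbounded factor $\tau(\bs C)^{t+1}$: I would split the sum at $\tau(\bs C)=\e^{3\Sigma_m}$, bounding $\tau^{t+1}\le\e^{3(t+1)\Sigma_m}$ in the low range (absorbable into $m^{5r}$ since $t+1<1.6$) so that the remaining sum of $|\cdot|$ is at most $\Delta_\bs A(5m\Sigma_m)$, and handling the tail $\tau(\bs C)>\e^{3\Sigma_m}$ by a cruder Rankin-type argument based on $\sum_\bs C\tau(\bs C)^s/\|\bs C\|\le\e^{2^s\Sigma_m+O_r(1)}$ to show that the tail contribution is negligible. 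This error-term balancing is the main technical challenge.
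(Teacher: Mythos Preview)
Your main-term analysis matches the paper's, but there are two issues in the error-term treatment, one minor and one genuine.

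\textbf{Minor.} After Rankin you sum over \emph{all} $m$-friable $\bs B$, yet you then assert $\Deg\bs C\le 5m\Sigma_m$ when regrouping by $\bs C=\bs{BG}$. That bound needs $\Deg\bs B\le m(\Sigma_m-2)$, which you never imposed. The fix is easy: replace the second event by $\{\tau(\bs A_{\le m})>T\}\cap\{\Deg\bs A_{\le m}\le m(\Sigma_m-2)\}$ (the complement is absorbed into your first event), and carry the restriction on $\bs B$ through the sieve.

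\textbf{The real gap.} Your ``cruder Rankin-type'' plan for the tail $\tau(\bs C)>\e^{3\Sigma_m}$ does not close. Once you split $|\Pro(\bs C|\bs A)-1/\|\bs C\||\le \Pro(\bs C|\bs A)+1/\|\bs C\|$, the bound $\sum_\bs C \tau(\bs C)^s/\|\bs C\|\le \e^{2^s\Sigma_m+O_r(1)}$ only handles the $1/\|\bs C\|$ part --- and even there, minimising $2^s-3(s-t-1)-t(1-1/r)$ over $s>t+1$ never drops below $-Q(u_0)$ (the minimum of $2^s-3s$ is about $-2$, while you would need roughly $-3.3$ to $-4.1$ depending on $r$). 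The $\Pro(\bs C|\bs A)$ part is worse: you have no independent control on it except through $\Delta_\bs A$, which is circular. The root of the problem is that you applied the Rankin weight $\tau(\bs B)^t$ \emph{before} invoking Lemma~\ref{8.2}, so the sieve error inherits the unbounded factor $\tau(\bs C)^{t+1}$.

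The paper sidesteps this entirely. It introduces a \emph{third} event $\{\omega(\bs A_{\le m})\ge 3\Sigma_m\}$, controlled by the separate Lemma~\ref{omega}, and works on the triple intersection $\{\tau\ge T,\ \Deg\le m(\Sigma_m-2),\ \omega\le 3\Sigma_m\}$. Crucially, it applies Lemma~\ref{8.2} to the \emph{unweighted} sum $\sum_{\bs B}\Pro(\bs A_{\le m}=\bs B)$ over this restricted range, and uses Rankin's trick only on the main term $\sum_{\tau(\bs B)\ge T}\Pi_m/\|\bs B\|$. After regrouping the error by $\bs C=\bs{BG}$, the restrictions give $\omega(\bs C)\le 7\Sigma_m+2$, so the decomposition multiplicity is uniformly $\le 2^{7\Sigma_m+2}=O_r(m^{5r})$ --- no tail, no $\tau(\bs C)^{t+1}$. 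The auxiliary $\omega$-event and Lemma~\ref{omega} exist precisely to make this work; that is the missing idea in your sketch.
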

\begin{proof}

    We have \begin{multline}\label{eq2}\Pro\left(\overline{\Ev_m}\right) \le \Pro\big(\Deg \bs A_{\le m}\ge m(\Sigma_m-2)\big) + \Pro\big(\omega(\bs A_{\le m})\ge 3\Sigma_m\big) \\+ \Pro\big(\tau(\bs A_{\le m})\ge \e^{(1-1/r)\Sigma_m}; \Deg \bs A_{\le m}\le m(\Sigma_m-2),\omega(\bs A_{\le m})\le 3\Sigma_m\big)\end{multline}
    with by Lemma \ref{deg-le-m}, $$\Pro\big(\Deg \bs A_{\le m} \ge m(\Sigma_m-2)\big) \le O_r(m^{-r}) + \Delta_\bs A(m\Sigma_m),$$ and by Lemma \ref{omega}, $$\Pro\big(\omega(\bs A_{\le m})\ge 3\Sigma_m\big) \le O_r(\e^{(1-Q(3))\Sigma_m}) + \Delta_\bs A(3m\Sigma_m).$$

    We can now estimate the main term $\Pro\big(\tau(\bs A_{\le m})\ge \e^{(1-1/r)\Sigma_m}; \Deg \bs A_{\le m}\le m(\Sigma_m-2),\omega(\bs A_{\le m})\le 3\Sigma_m\big)$ using our sieve lemma \ref{8.2}:
    \begin{align*}
        \Pro\big(&\tau(\bs A_{\le m})\ge \e^{(1-1/r)\Sigma_m}; \Deg \bs A_{\le m}\le m(\Sigma_m-2),\omega(\bs A_{\le m})\le 3\Sigma_m\big)
        \\&= \sum_{\substack{\bs B= \bs B_{\le m}\\ \tau(\bs B)\ge \e^{(1-1/r)\Sigma_m}\\ \Deg \bs B\le m(\Sigma_m-2)\\ \omega(\bs B)\le 3\Sigma_m}} \Pro(\bs A_{\le m}=\bs B)
        \\&\le \sum_{\substack{\bs B= \bs B_{\le m}\\ \tau(\bs B)\ge \e^{(1-1/r)\Sigma_m}}} 2\frac{\Pi_m}{\|\bs B\|} + \sum_{\substack{\bs B= \bs B_{\le m}\\\Deg \bs B\le m(\Sigma_m-2)\\\omega(\bs B)\le 3\Sigma_m}} \sum_{\substack{\bs G = \bs I_1\ldots \bs I_\ell\\I_j\in \I_m \text{ distinct}\\\ell\le 4\Sigma_m+2}} \left|\Pro(\bs B\bs G|\bs A)-\frac1{\|\bs B\bs G\|}\right|.
    \end{align*}

    To bound the main term, we use Rankin's trick again. We have for all $t>0,$
    \begin{align*}
        \sum_{\substack{\bs B = \bs B_{\le m}\\ \tau(\bs B)\ge \e^{(1-1/r)\Sigma_m}}} \frac{1}{\|\bs B\|} &\le 
        \sum_{\bs B = \bs B_{\le m}} \frac{\tau(\bs B)^{t}\e^{-t(1-1/r)\Sigma_m}}{\|\bs B\|} \\&= 
        \e^{-t(1-1/r)\Sigma_m} \prod_{\bs I\in \I_m} \sum_{\nu\ge 0} \frac{(\nu+1)^{t}}{\|\bs I\|^\nu} \\&\le
        \exp\left(-t(1-1/r)\Sigma_m + \sum_{\bs I\in\I_m}\sum_{\nu\ge 1} \frac{(\nu+1)^{t}}{\|\bs I\|^\nu}\right) \\&\le
        \e^{-t(1-1/r)\Sigma_m + 2^{t}\Sigma_m + S_{t}} = O_{r,t}\left(\e^{\left((1-1/r)t + 2^{t}\right)\Sigma_m}\right),
    \end{align*}
    where $S_t$ comes from Lemma \ref{squarefree}. Taking the optimal value $t =\frac{\log\frac{1-1/r}{\log 2}}{\log 2} >0$ (as $r\ge 4 > \frac{1}{1-\log 2}$), we get $$\sum_{\substack{\bs B = \bs B_{\le m}\\ \tau(\bs B)\ge \e^{(1-1/r)\Sigma_m}}} 2\frac{\Pi_m}{\|\bs B\|} =O_r\left(2\Pi_m\e^{\left[1-Q\left(\frac{1-1/r}{\log 2}\right)\right]\Sigma_m}\right) = O_r\left(\e^{-Q\left(\frac{1-1/r}{\log 2}\right)\Sigma_m}\right).$$

    To bound the error term, we regroup based on the value of $\bs C=\bs B\bs G.$ We then have
    \begin{align*}
        \sum_{\substack{\bs B= \bs B_{\le m}\\\Deg \bs B\le m(\Sigma_m-2)\\\omega(\bs B)\le 3\Sigma_m}} \sum_{\substack{\bs G = \bs I_1\ldots \bs I_\ell\\I_j\in \I_m \text{ distinct}\\\ell\le 4\Sigma_m+2}} \left|\Pro(\bs B\bs G|\bs A)-\frac1{\|\bs B\bs G\|}\right| &\le \sum_{\substack{\bs C = \bs C_{\le m}\\ \Deg \bs C\le 5m\Sigma_m \\ \omega(\bs C)\le 7\Sigma_m +2}} \#\{\bs G|\bs C \text{ square-free}\} \left|\Pro(\bs C|\bs A)-\frac1{\|\bs C\|}\right|
        \\&\le 2^{7\Sigma_m+2}\Delta_\bs A(5m\Sigma_m),
    \end{align*}
    remembering $\bs C = \bs C_{\le m}$ implies $\bs C$ has no $\bs X_i$ factor (given Definition \ref{m-friable}) and as such the sum on $\bs C$ can be bounded by a $\Delta_\bs A$ term.

    Combining this and the results from Lemmas \ref{deg-le-m} and \ref{omega} in (\ref{eq2}), we get
    \begin{align*}
        \Pro\left(\overline{\Ev_m}\right) &\le \begin{aligned}[t]
             O_r(m^{-r}) + \Delta_\bs A(m\Sigma_m) + O_r&\left(e^{(1-Q(3))\Sigma_m}\right) + \Delta_\bs A(3m\Sigma_m) \\&+ O_r\left(\e^{-Q\left(\frac{1-1/r}{\log 2}\right)\Sigma_m}\right) + 2^{7\Sigma_m+2}\Delta_\bs A(5m\Sigma_m)
        \end{aligned} \\&= O_r\left(m^{-rQ\left(\frac{1-1/r}{\log 2}\right)} + m^{5r}\Delta_\bs A(5m\Sigma_m)\right),
    \end{align*}
    as $\Sigma_m = r(\log m + O(1))$ by Lemma \ref{pi_m}, $Q(3)-1\ge Q(1/\log 2) \ge Q\left(\frac{1-1/r}{\log 2}\right)$ and~$\Delta_\bs A(m\Sigma_m)\le~ \Delta_\bs A(3m\Sigma_m)\le \Delta_\bs A(5m\Sigma_m).$
\end{proof}

\subsection{Proof of Proposition \ref{main-mod-pi}}

We can now combine Propositions \ref{degree-k-prop} and \ref{lemma-Ev} to prove Proposition \ref{main-mod-pi}.

\begin{proof}[Proof of Proposition \ref{main-mod-pi}]
    For $v>0,$ fix $m = m(v) := v/(\log v)^5.$ When $n_1$ is large enough compared to $r$ (which we can assume adjusting the implicit constant of the final big-Oh term), we then have $5m\Sigma_m \le~v/(\log v)^3$ and $v\ge 6m\Sigma_m$ as long as $v\ge n_1/2$ by Lemma \ref{pi_m}. Then, Propositions \ref{degree-k-prop} and \ref{lemma-Ev} give for these $v,$
    \begin{align*}
        \Pro&\left(\exists \bs D\in \bigcup_{v\le k\le 2v} \Po_{(k,\ldots, k)},\, \bs D|\bs A\right)
        \\&\le \Pro\left(\left(\exists \bs D\in \bigcup_{v\le k\le 2v} \Po_{(k,\ldots, k)},\, \bs D|\bs A\right) \cap \Ev_{m(v)}\right) + \Pro\left(\overline{\Ev_{m(v)}}\right)
        \\&= O_r\left(\frac vm(L\log m)^r m^{-rQ\left(\frac{1-1/r}{\log 2}\right)} + m^{5r}\Delta_\bs A(2v + 5m\Sigma_m)\right) + O_r\left(m^{-rQ\left(\frac{1-1/r}{\log 2}\right)} + m^{5r}\Delta_\bs A(5m\Sigma_m)\right)
        \\&= O_r\left(L^r v^{-rQ\left(\frac{1-1/r}{\log 2}\right) + o(1)} + v^{5r}\Delta_\bs A\left(2v + v/(\log v)^3\right)\right).
    \end{align*}

    Applying for $v_j = n_2/2^j$ and summing on $1\le j\le 1+ \frac{\log \frac{n_1}{n_2}}{\log 2}$ (so that the intervals $[v_j, 2v_j]$ cover $[n_1,n_2]$ and we always have $n_1/2\le v_j\le n_2/2$), we get
    $$\Pro\left(\exists \bs D\in \bigcup_{n_1\le k\le n_2} \Po_{(k,\ldots, k)},\, \bs D|\bs A\right) = O_r\left(L^r n_1^{-rQ\left(\frac{1-1/r}{\log 2}\right) + o(1)} + n_2^{5r}\Delta_\bs A(n_2 + n_2/(\log n_2)^3)\right).$$
\end{proof}

\section{Proofs of the main results}
We now come back to the settings of the introduction, with $A \in \Z[X]$ a random monic polynomial of fixed degree $n$ and with coefficients drawn independently according to the $\mu_j.$

\subsection{Factors of small degree}\label{small-degree}
We prove in this section Lemma \ref{small-cyclo}. We use the following result. 
\begin{lemma}[Kolmogorov-Rogozin, \cite{rogozin}]\label{rogozin}
    Let $Q(Y;\delta) := \sup_x \Pro(|Y-x|\le \delta).$ Then if $X_1, \ldots, X_k$ are independent and $S := \sum_{i=1}^k X_i,$ we have for $L \ge \ell,$
    $$Q(S; L) \le K\frac{L}{\ell} \left(\sum_{i=1}^k \big(1-Q(X_i;\ell)\big)\right)^{-1/2}$$ for some absolute constant $K.$
\end{lemma}

\begin{proof}[Proof of Lemma \ref{small-cyclo}]
    As $\Phi_d$ divides $X^d - 1,$ we have $$A \equiv \sum_{i=0}^{d-1} a'_iX^i \mod \Phi_d,$$ where $$a'_i=\sum_{j \equiv i \Mod d} a_j.$$ 
    
    Now, as $\Phi_d$ has degree $\varphi(d),$ for any choice of the $a'_i, i\ge \varphi(d),$ there is only one choice of the $a'_i, i<\varphi(d)$ such that $\Phi_d|A.$ As the $a'_i$ are independent, we thus get $$\Pro(\Phi_d|A)\le \prod_{i=0}^{\varphi(d)-1} \sup_x \Pro(a'_i=x).$$

    Now, as the $a_j$ take values in $\Z$ for all $j,$ we get $Q(a_j,1/2) = \sup_x \Pro(a_j=x) \le 1-c.$ Lemma \ref{rogozin} then gives for all $i,$ as $a'_i$ is a sum of $n/d + O(1)$ variables $a_j,$ $$\sup_x \Pro(a'_i=x) = Q(a'_i, 1/2) \le\sqrt{\frac{Kd}{cn}}$$ for a constant $K,$ so that $$\Pro(\Phi_d|A) \le \prod_{i=0}^{\varphi(d)-1} \sup_x \Pro(a'_i=x) \le \left(\frac{Kd}{cn}\right)^{\varphi(d)/2}.$$
\end{proof}

From this and Proposition \ref{small-degree-lemma}, we can deduce the following.

\begin{prop}\label{small-degree-full}
    Assume all the $\mu_j$ have support in $[-H,H]$ for some $H > 0,$ and do not take any value with probability higher than $1-c$ for some $c>0.$ Fix then $C > 0$ and let $U_C$ the finite subset of $\C$ consisting of $0$ and all roots of unity $\omega$ such that $[\Q(\omega):\Q] < C.$ Then for $n\ge (\log H)^3,$
    \begin{equation}
        \Pro\big(A \text{ has a factor of degree}\le n^{1/10}\big) = \Pro\big(\exists \omega\in U_C : A(\omega) = 0\big) + O_{C,c}(n^{-C/2}).
    \end{equation}
\end{prop}

\begin{proof}
    We treat non-cyclotomic, small cyclotomic and large cyclotomic factors separately. Note that $U_C$ contain exactly the complex roots of $X$ and cyclotomic factors of degree $<C,$ so that 
    \begin{multline*}
        0 \le \Pro\big(A \text{ has a factor of degree}\le n^{1/10}\big) - \Pro\big(\exists \omega\in U_C : A(\omega) = 0\big) \\ \le \Pro\big(A \text{ has a non-$X,$ non-cyclotomic factor of degree} \le n^{1/10}\big) + \sum_{C \le \varphi(d) \le n^{1/10}} \Pro(\Phi_d|A).
    \end{multline*}

    Using Proposition \ref{small-degree-lemma} to bound $\Pro\big(A \text{ has a non-$X,$ non-cyclotomic factor of degree} \le n^{1/10}\big),$ it is thus sufficient to prove $$\sum_{C \le \varphi(d) \le n^{1/10}} = O_{C,c}(n^{-C/2}).$$ 

    But using Lemma \ref{small-cyclo} and the fact probabilities are always smaller than one, we have
    
    \begin{align*}
        \sum_{C \le \varphi(d)\le n^{1/10}} \Pro(\Phi_d|A) &\le \sum_{C \le \varphi(d)\le n^{1/10}} \left(\min\left\{1;\frac{Kd}{cn}\right\}\right)^{\varphi(d)/2} \\ 
        &\le \sum_{C \le \varphi(d) < 2C} \left(\frac{Kd}{cn}\right)^{C/2} + \sum_{2C \le \varphi(d)\le n^{1/10}} \left(\frac{Kd}{cn}\right)^{C}
        &= O_{C,c}(n^{-C/2}).
    \end{align*}
\end{proof}

\subsection{The case of a segment}\label{uniform-final}
We now fix all $\mu_j$ equal to $\mu$ uniform over a segment $\llbracket a, a+N-1\rrbracket$ of length $N$ in order to prove Theorems~\ref{main-uniform} and \ref{uniform-case}. We first prove $\widehat\mu$ verifies (\ref{condition-hatmu}) in this case.

\begin{prop}\label{unif-Q}
    Let $\mu$ an uniform measure over a segment $\llbracket a, a+N-1\rrbracket$ of $\Z$ of length $N,$ and $Q$ a positive integer. Then for any $\theta_0\in\R/\Z,$ we have $$\sum_{k\in \Z/Q\Z} \big|\widehat\mu(k/Q+\theta_0)\big| \le 1 + \frac{Q\cdot\big(\log(Q-1) + 2\big)}{N}.$$
\end{prop}
\begin{proof}
    Here, for $\theta \not\equiv 0 \mod 1,$ we have $$\widehat\mu(\theta)=\frac 1N\sum_{j=a}^{a+N-1} \e^{2\ii\pi j\theta} = \frac1N \e^{2\ii\pi a\theta}\frac{1-\e^{2\ii\pi N\theta}}{1-\e^{2\ii\pi\theta}}, $$
    so that $$|\widehat\mu(\theta)| \le \frac 2{N|1-\e^{2\ii\pi \theta}|} =\frac 1{N|\sin\pi\theta|}.$$
    We also always have $\big|\widehat\mu(\theta)\big| \le\frac 1N \sum_{j=a}^{a+N-1} \big|\e^{2\ii\pi j\theta}\big| = 1$ for all $\theta.$

    Now, note that the sum we want to bound does not change when $\theta_0$ is translated by a multiple of $1/Q.$ Thus, we can suppose without loss of generality that $\theta_0\in ]-1/2Q, 1/2Q],$ so that when $k\in [-Q/2, Q/2]$ with $|k|\ge 1,$ we have $$\big|\sin\pi(k/Q+\theta_0)\big|\ge \min_{(k-1/2)/Q\le\theta\le(k+1/2)/Q} |\sin\pi\theta| = \sin\pi\frac{|k|-1/2}Q \ge \frac{2|k|-1}{Q}$$ (using $\sin x \ge \frac2\pi x$ in $[0,\pi/2]$).

    Then, rewriting the sum over $\Z/Q\Z$ as a sum over integers in $]-Q/2,Q/2],$ we get
    \begin{align*}
        \sum_{k\in \Z/Q\Z} \big|\widehat\mu(k/Q+\theta_0)\big| &\le \big|\widehat\mu(\theta_0)\big| + \sum_{1\le|k|\le Q/2} \frac{1}{N\sin\pi(k/Q+\theta_0)} \\&\le 1 + \sum_{1\le |k|\le Q/2}\frac{Q}{N(2|k|-1)}
        \\& = 1 + 2\frac Q{2N} \sum_{k=1}^{\lfloor Q/2\rfloor}\frac 1{k-1/2}
        \\&\le 1 + \frac QN \left(2 +\int_{1/2}^{Q/2-1/2}\frac 1t\,\de t\right) = 1 + \frac{Q\cdot\big(\log(Q-1) +2\big)}{N}.
    \end{align*}
\end{proof}

We can now prove Theorem \ref{main-uniform}.

\begin{proof}[Proof of Theorem \ref{main-uniform}]
    We start by proving the second statement of the theorem. Here, Proposition \ref{small-degree-full} applies with $c = 1/2$ since the $\mu_j$ are uniform over a segment of length $N\ge 2.$ Thus, it is sufficient to show $$\Pro\big(A \text{ has a factor of degree } \in [n^{1/10},\delta n]\big) = O_C(n^{-C/2})$$
    for some $\delta$ depending only on $C.$

    Fix $r_1\ge 4$ such that $r_1Q\left(\frac{1-1/r_1}{\log 2}\right) > 5C$ (this is possible since $rQ\left(\frac{1-1/r}{\log 2}\right) \asymp r \to \infty$ when $r\to\infty$), and $P = p_1\ldots p_{r_1}$ the product of the $r_1$ smallest prime numbers. 
    
    Now, let $\mu$ the uniform measure on $\llbracket a, a+N-1\rrbracket$ (which depends on $N$ and $a$), and note that for all $\theta\not\equiv 0\mod 1,$ we have $\max_{N\ge 2} |\widehat\mu(\theta)|<1$ (this is a $\max$ because $|\widehat\mu(\theta)|$ does not depend on $a$ and tends to $0$ when $N\to\infty$), so that $|\widehat\mu(\theta)|^s\to 0$ when $s\to\infty,$ uniformly in $N$ and $a.$ Thus, for any fixed $Q, R=P/Q, \ell\in\Z/R\Z,$ for $s$ large enough (depending only on $Q,R,\ell$), we have $$\sum_{k\in \Z/Q\Z} \big|\widehat{\mu_j}(k/Q+\ell/R)\big|^s\le \left(1-\frac1{4\log P}\right)\sqrt Q.$$ 

    As $P$ has a finite number of divisors, we can then fix $s$ (depending only on $P,$ which depends only on $C$) which verifies the condition above for all $Q$ dividing $P$ and $\ell\in\Z/R\Z.$ Then with $\delta := 1/2s,$ Theorem \ref{general-case} with $n_1 = n^{1/10}$ gives us for $n\ge P^4,$ $$\Pro\big(A \text{ has a factor of degree} \in [n^{1/10}, \delta n]\big) = O_{r_1,s}\left(n^{-\frac{1}{10} r_1Q\left(\frac{1-1/r_1}{\log 2}\right) + o(1)}\right) = O_{C}(n^{-C/2})$$ by definition of $r_1,$ as desired. As $P$ depends only on $C$ and $\eps,$ and probabilities are always bounded by $1,$ we can adjust the implicit constant of the big-Oh term to extend it to the values of $n\le P^4$ as well.

    We now prove the first statement. This time, our starting point is the second statement (which we just proved), which tells us
    \begin{equation}\label{lemma4}
        \Pro\big(A \text{ has a factor of degree}\le \delta n\big) = \Pro\big(\exists \omega\in U_C : A(\omega) = 0\big) + O_{C}(n^{-C/2})
    \end{equation} for some $\delta$ depending only on $C.$

    Now, fix $r\ge 4$ such that $rQ\left(\frac{1-1/r}{\log 2}\right) > C/2$ and $P = p_1\ldots p_r$ the product of the $r$ smallest primes, and \begin{equation}\label{bound-N}
        N_0 \ge f(P) := \frac{P\cdot\big(\log (P-1) + 2\big)}{0.99\sqrt P - 1}
    \end{equation} (the constant $0.99$ here is arbitrary, and $N_0$ depends only on $C$ as $P$ does. As a matter of fact, when $C = 1,$ these constraints are satisfied with $r=12,$ $P = 2\cdot 3\cdot\ldots\cdot 37\simeq 7.4\cdot 10^{12}$ and $N_0 = 10^8.$)

    For $N\ge N_0$ and $2\le Q\le P,$ as $f$ is increasing on $[2,+\infty[,$ we then have $N\ge N_0\ge f(P)\ge f(Q)$ and then by Proposition \ref{unif-Q}, for all $\theta_0,$ $$\sum_{k\in \Z/Q\Z} \big|\widehat\mu(k/Q+\theta_0)\big| \le 1 + \frac{Q\cdot\big(\log(Q-1)+2\big)}{N} \le 1 + \frac{Q\cdot\big(\log(Q-1)+2\big)}{f(Q)} = 0.99\sqrt Q,$$ where $\mu$ uniform over $\llbracket a, a+N-1\rrbracket.$

    Thus, Theorem \ref{general-case} applies with $s=1$ for $n\ge\max\{P^4, \e^{100}\}$ and gives us $$\Pro\big(A \text{ has a factor of degree} \in [\delta n, n/2]\big) = O_{r,s}\left((\delta n)^{-rQ\left(\frac{1-1/r}{\log 2}\right) + o(1)}\right) = O_{C}(n^{-C/2}),$$ a result which can again be extended to smaller values of $n$ adjusting the implicit constant of the big-Oh term. Combining with (\ref{lemma4}) concludes the proof.
\end{proof}




\section{Acknowledgments}
This work was the result of my research for my master's thesis, under supervision of R\'egis de la Bret\`eche (Universit\'e Paris Cit\'e). I would like to thank him for presenting me the topic and guiding me throughout the writing of this paper, as well as his useful review and commentaries on it. I would also like to thank the anonymous reviewer for their valuable remarks on the paper.


\end{document}